\newcommand\CA{{\mathscr A}} 
\newcommand\CB{{\mathscr B}}
\newcommand\CC{{\mathscr C}} 
\newcommand\CK{{\mathscr K}}
\newcommand\CW{{\mathscr W}}
\newcommand\Gc{{\mathcal G}}
\DeclareMathOperator{\dist}{dist}
\newcommand\BBC{{\mathbb C}}
\newcommand\BBK{{\mathbb K}}
\newcommand\BBN{{\mathbb N}}
\newcommand\BBR{{\mathbb R}}
\newcommand\BBZ{{\mathbb Z}}
\newcommand{\KK }{\mathbb{K}}
\newcommand\wt[1]{\widetilde{#1}}
\DeclareMathOperator{\coker}{coker}
\DeclareMathOperator{\im }{im}
\newcommand\Ann{{\operatorname{Ann}}}
\newcommand\codim{\operatorname{codim}}
\newcommand\Der{{\operatorname{Der}}}
\newcommand\lc{{\operatorname{lc}}}
\newcommand\Poin{{\operatorname{Poin}}}
\newcommand\res{\operatorname{res}}
\newcommand\rk{\operatorname{rk}}
\newcommand\HH{\operatorname{H}}
\renewcommand\SS{\operatorname{S}}
\numberwithin{equation}{section}
\theoremstyle{plain}
\newtheorem{lemma}[equation]{Lemma}
\newtheorem{theorem}[equation]{Theorem}
\newtheorem{corollary}[equation]{Corollary}
\newtheorem{proposition}[equation]{Proposition}
\theoremstyle{definition}
\newtheorem{defn}[equation]{Definition}
\newtheorem{remark}[equation]{Remark}
\newtheorem{remarks}[equation]{Remarks}
\newtheorem{example}[equation]{Example}
\subjclass[2010]{52C35, 14N20}
\begin{document}

\title[Formality and Combinatorial Formality]
{On Formality and Combinatorial Formality for hyperplane arrangements}

\author[T.~M\"oller]{Tilman M\"oller}
\address
{Fakult\"at f\"ur Mathematik,
	Ruhr-Universit\"at Bochum,
	D-44780 Bochum, Germany}
\email{tilman.moeller@rub.de}

\author[P.~M\"ucksch]{Paul M\"ucksch}
\address
{Mathematisches Forschungsinstitut Oberwolfach,
	Schwarzwaldstr. 9-11,
	77709 Oberwolfach-Walke,
	Germany}
\email{paul.muecksch@gmail.com}

\author[G.~R\"ohrle]{Gerhard R\"ohrle}
\address
{Fakult\"at f\"ur Mathematik,
	Ruhr-Universit\"at Bochum,
	D-44780 Bochum, Germany}
\email{gerhard.roehrle@rub.de}

\keywords{Hyperplane arrangements, formality, $k$-formality, combinatorial formality, free arrangements, factored arrangements, $K(\pi,1)$-arrangements.}

\allowdisplaybreaks

\begin{abstract}
A hyperplane arrangement is called formal provided all linear dependencies among the defining forms of the hyperplanes are generated by ones corresponding to intersections  of codimension two. 
The significance of this notion stems from the fact that
complex arrangements with aspherical complements are formal.

The aim of this note is twofold.
While work of Yuzvinsky shows that formality is not combinatorial, 
in our first main theorem we prove that the combinatorial property of factoredness of arrangements does entail formality. 

Our second main theorem shows that formality is hereditary, i.e.~is passed to restrictions. This is rather counter-intuitive, as in contrast the known sufficient conditions for formality,  i.e.~asphericity, freeness and factoredness (owed to our first theorem), are not hereditary themselves.
We also demonstrate that the stronger property of $k$-formality, due to Brandt and Terao, is not hereditary.
\end{abstract}

\maketitle



\section{Introduction}

In the study of hyperplane arrangements it is of central concern to connect algebraic or topological properties (over $\BBC$)  with combinatorial invariants encoded by their intersection lattices.
Yet another important theme is to investigate the behavior of a given notion under the standard arrangement constructions of restriction and localization.
Following this philosophy, the aim of our note is to relate the combinatorial notion of factoredness with the general concept of formality on the one hand and on the other to complete the picture about the behavior of formality with respect to restriction and localization, complementing previous results in the literature.

A hyperplane arrangement 
$\CA$ is \emph{formal} provided all linear dependencies among the defining forms of the hyperplanes in $\CA$ are generated by dependencies corresponding to intersections  of codimension $2$ of the members of $\CA$ (see \S \ref{ssect:formality}). 
Over the complex numbers this property is implied by $K(\pi,1)$-arrangements (see Theorem \ref{thm:kpi1formal}). 

A property for arrangements is said to be \emph{combinatorial} if it only depends on the intersection lattice of the underlying arrangement.
Yuzvinsky \cite[Ex.~2.2]{yuzvinsky:obstruction} demonstrated that formality is not combinatorial, answering a question raised by Falk and Randell \cite{falkrandell:homotopy} in the negative.
Yuzvinsky's insight motivates the following notion.
Suppose $\CA$ is a formal arrangement. We say $\CA$ is \emph{combinatorially formal} if every arrangement with 
an intersection lattice isomorphic to the one of
$\CA$ is also formal.
In view of Yuzvinsky's result it is therefore startling that even this strong form of formality is afforded by the combinatorial property of \emph{factoredness}  (see \S \ref{ssect:factored}). This is the content of our first main result.

\begin{theorem}
	\label{thm:factored-cformal}
	If $\CA$ is factored, then it is combinatorially formal.
\end{theorem}

In \cite{falkrandell:homotopy}, the authors raised the question whether \emph{freeness} (see \S \ref{ssect:free}) 
also implies formality. This was settled affirmatively by 
Yuzvinsky in \cite[Cor.~2.5]{yuzvinsky:obstruction}.

While both, freeness and asphericity, do imply formality, it is not known whether the former are combinatorial or not (cf.~\cite[Prob.~3.8]{falkrandell:homotopyII}).
For the concept of freeness this is a longstanding conjecture due to Terao (cf.~\cite[Conj.~4.138]{orlikterao:arrangements}).

A property for arrangements is said to be \emph{hereditary} if it is inherited by every restriction of the underlying arrangement. 
Freeness, asphericity, and factoredness are known to \emph{not} be hereditary in general (cf.~\cite{edelmanreiner:orlik}, \cite[Thm.~1.1]{amendmoellerroehrle:aspherical}, and Example \ref{ex:heredfactored}, respectively).
It is thus rather surprising and counter-intuitive that formality  on the other hand, which is entailed by each of these properties, \emph{is} hereditary. This is the content of our second main theorem.

\begin{theorem}
	\label{thm:restriction-formal}
	Formality is hereditary.
\end{theorem}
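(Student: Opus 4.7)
The plan is to define a $\BBK$-linear map $\rho\colon F(\CA) \to F(\CA^X)$ between spaces of linear dependencies that is surjective and sends each codimension-two generator of $F(\CA)$ either to a codimension-two generator of $F(\CA^X)$ or to $0$; applied to a relation $s \in F(\CA^X)$ which has been lifted to some $r \in F(\CA)$ and decomposed via formality of $\CA$, this will exhibit $s$ as a sum of codimension-two relations of $\CA^X$. Here I identify $F(\CA)$ with the kernel of the map $\BBK^{\CA} \to V^{*}$, $e_{H} \mapsto \alpha_{H}$, and write $F_{Z}(\CA) \subseteq F(\CA)$ for the subspace of relations supported on $\{H \in \CA : Z \subseteq H\}$.

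To construct $\rho$, for each $Y \in \CA^{X}$ I would fix a representative $H(Y) \in \CA$ with $H(Y) \cap X = Y$, and for any $H \in \CA$ with $H \not\supseteq X$ and $H \cap X = Y$ let $c_{H} \in \BBK^{\times}$ be the unique scalar satisfying $\alpha_{H}|_{X} = c_{H}\, \alpha_{H(Y)}|_{X}$. Setting $\rho(e_{H}) := c_{H}\, e_{H \cap X}$ when $H \not\supseteq X$ and $\rho(e_{H}) := 0$ otherwise, restricting any identity $\sum_{H} a_{H} \alpha_{H} = 0$ to $X$ confirms $\rho(F(\CA)) \subseteq F(\CA^{X})$. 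For surjectivity, given $s = \sum_{Y} c_{Y} e_{Y} \in F(\CA^{X})$, the form $\sum_{Y} c_{Y} \alpha_{H(Y)} \in V^{*}$ vanishes on $X$; since $X = \bigcap_{H \in \CA_{X}} H$, the annihilator of $X$ in $V^{*}$ equals the span of $\{\alpha_{H} : H \in \CA_{X}\}$, so one can absorb a suitable correction $\sum_{H \in \CA_{X}} d_{H} e_{H}$ to produce $r \in F(\CA)$ with $\rho(r) = s$.

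The heart of the argument is analyzing $\rho$ on a generator $r \in F_{Z}(\CA)$ for $Z \in L_{2}(\CA)$, split into three cases by the position of $Z$ relative to $X$. (a) If $Z \not\supseteq X$ and $\codim_{X}(Z \cap X) = 2$, then $Z \cap X \in L_{2}(\CA^{X})$ and every $H$ in the support of $r$ with $H \not\supseteq X$ yields a hyperplane $H \cap X \in \CA^{X}$ containing $Z \cap X$, whence $\rho(r) \in F_{Z \cap X}(\CA^{X})$. (b) If $Z \not\supseteq X$ and $\codim_{X}(Z \cap X) = 1$, then all such $H$ collapse onto the single hyperplane $Z \cap X \in \CA^{X}$; hence $\rho(r)$ is a scalar multiple of the symbol $e_{Z \cap X}$, and membership of $\rho(r)$ in $F(\CA^{X})$ forces that scalar to vanish. (c) If $Z \supseteq X$, every $H \supseteq Z$ contains $X$, so $\rho(r) = 0$. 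Standard intersection inequalities ensure these three cases exhaust all possibilities.

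Finally, I would assemble the pieces: given $s \in F(\CA^{X})$, lift to $r \in F(\CA)$ with $\rho(r) = s$, invoke formality of $\CA$ to write $r = \sum_{k} r_{k}$ with $r_{k} \in F_{Z_{k}}(\CA)$ and $Z_{k} \in L_{2}(\CA)$, and then $s = \sum_{k} \rho(r_{k})$ is the desired sum of codimension-two generators of $F(\CA^{X})$, proving $\CA^{X}$ is formal. I expect the main obstacle to be case (b): the collapsing of several hyperplanes of $\CA$ onto a single hyperplane of $\CA^{X}$ a priori produces a nontrivial term, and the cancellation rests crucially on $\rho(r)$ being a genuine linear relation in $F(\CA^{X})$ rather than merely a formal expression—a subtle but essential consequence of the containment $\rho(F(\CA)) \subseteq F(\CA^{X})$.
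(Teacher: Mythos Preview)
Your proof is correct and follows essentially the same strategy as the paper: both construct a surjective $\BBK$-linear map $F(\CA)\to F(\CA^X)$ (your $\rho$ is, after identifying $\Ann_{V^*}(H)$ with $\BBK e_H$ via the chosen $\alpha_H$, precisely the paper's induced map $\wt\psi_1$) and show it carries rank-$2$ relations into rank-$2$ relations. The paper packages the surjectivity via the snake lemma and the image-containment via a single commutative-diagram observation, whereas you give an explicit lift and a three-case analysis of the position of $Z$ relative to $X$; the content is the same.
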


The notion of formality was further refined to the concept of \emph{$k$-formality} by Brandt and Terao \cite[Def.~4.4]{brandtterao} (see \S \ref{s:complements}).
In the setting of \cite{brandtterao}, formality coincides with $2$-formality.
Higher formality is stronger than formality, cf.~\cite[Ex.~5.1]{brandtterao}. 
In \cite[Thm.~4.15]{brandtterao}, Brandt and Terao extended Yuzvinsky's result \cite[Cor.~2.5]{yuzvinsky:obstruction} by showing that 
free arrangements are $k$-formal for all $k$.
Extending \cite[Thm.~4.15]{brandtterao} further, in \cite[Thm.~1.1]{DiPasquale:freeness}, DiPasquale employed homological methods based on those developed in \cite{brandtterao} 
to characterize freeness of a multiplicity of an arrangement and showed as a consequence 
that the latter implies $k$-formality for all $k$, \cite[Cor.~4.10]{DiPasquale:freeness}.
We emphasize that Theorem \ref{thm:restriction-formal} does not extend to  the stronger notion of $k$-formality, see Example \ref{ex:3formal-not-hereditiary}.

A property for arrangements is said to be \emph{local} if it is passed to every localization of the ambient arrangement. 
Freeness, asphericity, and factoredness
are all local (cf.~\cite[Thm.~4.37]{orlikterao:arrangements}, \cite[Lem.~1.1]{paris:deligne}, and the proof of \cite[Cor.~2.11]{terao:factored},
respectively). In contrast, 
this fails for formal arrangements of rank at least $4$ (cf.~the example following \cite[Def.~2.3]{yuzvinsky:obstruction}). 
However, as an application of Theorem \ref{thm:restriction-formal}, we in turn show that 
localizations at modular flats of formal arrangements are formal again, see Corollary \ref{cor:modular-formal}.

Moreover, 
it is also known that 
if $X$ is a modular flat of $\CA$ of corank  $1$ and the localization $\CA_X$ is free, $K(\pi,1)$, or factored, then so is $\CA$ itself
(cf.~\cite[Rem.~2.8]{roehrle:ideal}, \cite{terao:modular}, and \cite[Rem.~2.25]{roehrle:ideal}, respectively).
In Corollary \ref{cor:modular-formal-corank1}, we show that this property also holds for
formality.

Following Yuzvinsky \cite[Def.~2.3]{yuzvinsky:obstruction}, we say that $\CA$ is \emph{locally formal} provided that each localization of $\CA$ is formal.
Since freeness, asphericity, and factoredness
are all local and each affords formality, each of these properties implies local formality. 
Since formality is not passed to localizations in general, as noted above, the class of formal arrangements properly encompasses the classes of free, factored and $K(\pi,1)$-arrangements.
The following is immediate from Theorem \ref{thm:restriction-formal}.

\begin{corollary}
	\label{cor:restrition-tot-formal}
	Local formality is hereditary.
\end{corollary}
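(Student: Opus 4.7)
The plan is to deduce Corollary \ref{cor:restrition-tot-formal} directly from Theorem \ref{thm:restriction-formal} together with a standard commutation identity between localization and restriction. The key lattice-theoretic observation is the following: for any $X \in L(\CA)$ and any flat $Y$ of $\CA^X$ (so $Y \subseteq X$), one has the equality of arrangements
\[
(\CA^X)_Y \;=\; (\CA_Y)^X
\]
inside $X$, where we view $Y$ as a flat of $\CA$ by replacing it, if necessary, with the smallest flat of $\CA$ containing it (this does not alter the associated localization).

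Granting this, the proof is a short composition. Let $\CA$ be locally formal and fix $X \in L(\CA)$. To show that $\CA^X$ is locally formal, let $Y$ be any flat of $\CA^X$. Since $Y \subseteq X$, every hyperplane of $\CA$ containing $X$ automatically contains $Y$, so $X$ is itself a flat of $\CA_Y$ and the restriction $(\CA_Y)^X$ is defined. The hypothesis of local formality gives that $\CA_Y$ is formal, and Theorem \ref{thm:restriction-formal} then ensures that $(\CA_Y)^X$ is formal. By the identification above, $(\CA^X)_Y$ is formal. Since $Y$ was an arbitrary flat of $\CA^X$, the arrangement $\CA^X$ is locally formal. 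As $X$ was arbitrary, local formality is hereditary.

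I do not expect any substantial obstacle: both ingredients are already in hand, and the only thing to verify is the routine set-theoretic identity $(\CA^X)_Y = (\CA_Y)^X$. Indeed, a hyperplane $H \cap X$ of $\CA^X$ lies in $(\CA^X)_Y$ precisely when $Y \subseteq H \cap X$, which, because $Y \subseteq X$, reduces to $H \in \CA_Y$ and $H \not\supseteq X$, and these are exactly the hyperplanes defining $(\CA_Y)^X$. Thus the entire argument reduces to invoking Theorem \ref{thm:restriction-formal} at each localization of the restriction under consideration.
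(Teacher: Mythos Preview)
Your argument is correct and is precisely the natural expansion of the paper's one-line justification ``immediate from Theorem \ref{thm:restriction-formal}'': the commutation identity $(\CA^X)_Y = (\CA_Y)^X$ is exactly what makes the deduction work, and your verification of it is fine. One small remark: the hedge about replacing $Y$ by the smallest flat of $\CA$ containing it is unnecessary, since any $Y \in L(\CA^X)$ is already an intersection of hyperplanes of $\CA$ (write $Y = X \cap \bigcap_i H_i$ with $H_i \in \CA \setminus \CA_X$ and recall that $X$ itself is an intersection of hyperplanes in $\CA_X$), hence $Y \in L(\CA)$ outright.
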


The paper is organized as follows. In \S \ref{ssect:arrangements}
and \S \ref{ssect:modular}, we recall some standard terminology and introduce 
further notation 
on hyperplane arrangements and record some basic facts on modular elements
in the lattice of intersections of an arrangement.
The notions of formality and $k$-formality are 
recapitulated from \cite{brandtterao} in \S \ref{ssect:formality}. This is followed by a brief discussion of the concept of line closure of a subset of an arrangement. Here we review Falk's criterion that the presence of an lc-basis entails 
combinatorial formality (Proposition \ref{prop:lcbasis}). This in turn is the key ingredient in our proof of Theorem \ref{thm:factored-cformal}.
Short recollections on the fundamental notions of $K(\pi,1)$-arrangements (\S \ref{ssect:kpionearrangements}) and simplicial arrangements (\S \ref{ssec:SimplicialArrs}) follow.
In \S \ref{ssect:free} and \S \ref{ssect:factored}, 
the concepts of free and factored 
arrangements are recalled.

Theorems \ref{thm:factored-cformal} and \ref{thm:restriction-formal} are proved in \S \ref{sec:factored-kformal} and \S \ref{sec:restriction-formal}, respectively.

In our final section, we present some complements to our main developments. Here we  demonstrate in Example \ref{ex:3formal-not-hereditiary} that Theorem \ref{thm:restriction-formal} does not extend to higher formality.

In addition, we present a family of combinatorially formal arrangements $\CK_n$ of rank $n \ge 3$ which fail to be free, $K(\pi,1)$, and factored (Example \ref{ex:nonkpioneII}). These are natural subarrangements of the Coxeter arrangement of type $B_n$. 

For general information about arrangements  
we refer the reader to 
\cite{orlikterao:arrangements}.

\section{Recollections and Preliminaries}
\label{sect:prelims}

\subsection{Hyperplane arrangements}
\label{ssect:arrangements}
Let $\BBK$ be a field and let
$V = \BBK^\ell$ be an $\ell$-dimensional $\BBK$-vector space.
A \emph{hyperplane arrangement} $\CA = (\CA, V)$ in $V$ 
is a finite collection of hyperplanes in $V$ each 
containing the origin of $V$.
We also use the term $\ell$-arrangement for $\CA$. 
We denote the empty arrangement in $V$ by $\Phi_\ell$.

The \emph{lattice} $L(\CA)$ of $\CA$ is the set of subspaces of $V$ of
the form $H_1\cap \dotsm \cap H_i$ where $\{ H_1, \ldots, H_i\}$ is a subset
of $\CA$. 
For $X \in L(\CA)$, we have two associated arrangements, 
firstly
$\CA_X :=\{H \in \CA \mid X \subseteq H\} \subseteq \CA$,
the \emph{localization of $\CA$ at $X$}, 
and secondly, 
the \emph{restriction of $\CA$ to $X$}, $(\CA^X,X)$, where 
$\CA^X := \{ X \cap H \mid H \in \CA \setminus \CA_X\}$.
Note that $V$ belongs to $L(\CA)$
as the intersection of the empty 
collection of hyperplanes and $\CA^V = \CA$. 
The lattice $L(\CA)$ is a partially ordered set by reverse inclusion:
$X \le Y$ provided $Y \subseteq X$ for $X,Y \in L(\CA)$.

Throughout, we only consider arrangements $\CA$
such that $0 \in H$ for each $H$ in $\CA$.
These are called \emph{central}.
In that case the \emph{center} 
$T(\CA) := \cap_{H \in \CA} H$ of $\CA$ is the unique
maximal element in $L(\CA)$  with respect
to the partial order.
A \emph{rank} function on $L(\CA)$
is given by $r(X) := \codim_V(X)$.
The \emph{rank} of $\CA$ 
is defined as $r(\CA) := r(T(\CA))$.

For $1\leq k \leq r(\CA)$ we write $L_k(\CA)$ for the collection of all $X\in L(\CA)$ of rank $k$. 

\subsection{Modular elements in $L(\CA)$}
\label{ssect:modular}

We say that $X \in L(\CA)$ is \emph{modular}
provided $X + Y \in L(\CA)$ for every $Y \in L(\CA)$,
 \cite[Cor.\ 2.26]{orlikterao:arrangements}.
We require the following characterization of modular members of $L(\CA)$ of rank $r-1$,
see the proof of \cite[Thm.\ 4.3]{bjoerneredelmanziegler}.

\begin{lemma}
\label{lem:modular1}
Let $\CA$ be an arrangement of rank $r$.
Suppose that $X \in L(\CA)$ is of rank $r-1$.
Then $X$ is modular if and only if 
for any two distinct $H_1, H_2 \in \CA \setminus \CA_X$ there
is a $H_3 \in \CA_X$ so that $r(H_1 \cap H_2 \cap H_3) = 2$,
i.e.\ $H_1, H_2, H_3$ are linearly dependent.
\end{lemma}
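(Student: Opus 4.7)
The plan is to reduce first to the essential case by passing to $V/T(\CA)$; this leaves $L(\CA)$ and the rank function unchanged, so both the modularity of $X$ and the stated pairwise condition are invariant. After this reduction $\CA$ is essential of rank $r=\ell$ and $X$ is a line in $V$. I would fix a spanning vector $e\in X$ and note that for the defining form $\alpha_H$ of $H\in\CA$ one has $H\in\CA_X$ iff $\alpha_H(e)=0$; for $H\in\CA\setminus\CA_X$ rescale so that $\alpha_H(e)=1$.

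For the forward implication, assume $X$ is modular and pick distinct $H_1,H_2\in\CA\setminus\CA_X$. Setting $Y:=H_1\cap H_2\in L(\CA)$, modularity yields $X+Y\in L(\CA)$. Since $X$ is a line not contained in either $H_i$, one has $X\cap Y=0$, hence $\dim(X+Y)=1+(\ell-2)=\ell-1$. Thus $X+Y$ is a single hyperplane $H_3\in\CA$, and the inclusions $X\subseteq H_3$ and $Y\subseteq H_3$ give both $H_3\in\CA_X$ and $r(H_1\cap H_2\cap H_3)=r(H_1\cap H_2)=2$.

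For the converse, fix $Y\in L(\CA)$ and split $\CA_Y=(\CA_Y\cap\CA_X)\sqcup(\CA_Y\setminus\CA_X)$, setting $Y_1:=\bigcap(\CA_Y\cap\CA_X)$ and $Y_2:=\bigcap(\CA_Y\setminus\CA_X)$, so that $X\subseteq Y_1$ and $Y=Y_1\cap Y_2$. Because the lattice of subspaces of $V$ is modular, $X+Y=Y_1\cap(X+Y_2)$, and since $Y_1\in L(\CA)$ it suffices to show $X+Y_2\in L(\CA)$. This is trivial when $\CA_Y\setminus\CA_X$ has at most one element, both cases giving $X+Y_2=V$, so the content lies in treating $Y_2=H_1\cap\dots\cap H_s$ with $s\ge 2$.

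For such $Y_2$, the hypothesis supplies, for each $i\ge 2$, some $H_{1i}\in\CA_X$ with $H_1\cap H_i\subseteq H_{1i}$, whence $X+Y_2\subseteq\bigcap_{i=2}^{s}H_{1i}$. The main step, and the chief obstacle, is matching dimensions here. With the normalization $\alpha_i(e)=1$, the defining form of $H_{1i}$ is a scalar of $\alpha_1-\alpha_i$, so these defining forms all lie in $L\cap X^\perp$ where $L:=\operatorname{span}(\alpha_1,\dots,\alpha_s)$; since $\alpha_1\notin X^\perp$ while $L=\BBK\alpha_1+\operatorname{span}(\alpha_1-\alpha_i:2\le i\le s)$, the span of the differences has dimension $\dim L-1=\rank(Y_2)-1$. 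Consequently $\dim\bigcap_{i=2}^{s}H_{1i}=\ell-\rank(Y_2)+1=\dim Y_2+1=\dim(X+Y_2)$, and the inclusion above is an equality, placing $X+Y_2$ in $L(\CA)$ and completing the argument.
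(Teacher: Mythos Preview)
Your argument is correct. The paper does not supply its own proof of this lemma; it simply refers the reader to the proof of \cite[Thm.~4.3]{bjoerneredelmanziegler}. So there is nothing in the present paper to compare your approach against.

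As a self-contained proof your reduction to the essential case, the forward direction via $H_3:=X+(H_1\cap H_2)$, and the converse via the decomposition $X+Y=Y_1\cap(X+Y_2)$ with the dimension count $\dim\bigl(\bigcap_{i\ge 2}H_{1i}\bigr)=\dim Y_2+1$ are all sound. The key technical point---that after normalizing $\alpha_i(e)=1$ the hyperplane $H_{1i}$ is forced to be $\ker(\alpha_1-\alpha_i)$, so that the defining forms of the $H_{1i}$ span exactly $L\cap X^\perp$ of dimension $\dim L-1$---is handled cleanly. One small remark: you might note explicitly that $X\cap Y_2=0$ (since $X$ is a line not contained in $H_1\supseteq Y_2$), which you use when computing $\dim(X+Y_2)=\dim Y_2+1$; this is implicit but worth a word.
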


Next, we record a refined geometric version of a standard fact
about modular elements in a 
geometric lattice, cf.~\cite[Prop.\ 2.42]{aigner:combinatorialtheory}
or \cite[Lem.\ 2.27]{orlikterao:arrangements}.

\begin{lemma}
	\label{lem:modular2}
	Let $\CA$ be an arrangement of rank $r$.
	Suppose that $X \in L(\CA)$ is modular of rank $q$.
	Then there is a complementary intersection $Y \in L(\CA)$ of rank $r-q$ 
	(i.e.\ $X\cap Y = T(\CA)$)
	such that the arrangements $(\CA_X/X,V/X)$ and $(\CA^Y/T(\CA),Y/T(\CA))$ are linearly isomorphic.
\end{lemma}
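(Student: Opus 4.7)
The plan is to first invoke the existence of a modular complement $Y$ in the geometric lattice $L(\CA)$, which is the standard content of \cite[Prop.~2.42]{aigner:combinatorialtheory} or \cite[Lem.~2.27]{orlikterao:arrangements}: since $X$ is modular of rank $q$, there exists $Y \in L(\CA)$ with $X \cap Y = T(\CA)$ (the join in $L(\CA)$) and such that no proper flat contains $X \cup Y$ (the meet in $L(\CA)$ equals $V$). Equality in the semimodular inequality (forced by modularity of $X$) yields $r(Y) = r - q$. These citations already appear in the statement, so no new machinery is needed for this step.

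Next, I would produce the linear isomorphism. From $\dim X = \ell - q$, $\dim Y = \ell - r + q$, and $\dim(X \cap Y) = \ell - r$, one gets $\dim(X + Y) = \ell$, i.e.\ $X + Y = V$. Hence the composition $Y \hookrightarrow V \twoheadrightarrow V/X$ has kernel $X \cap Y = T(\CA)$ and image of dimension $q = \dim V/X$, so it descends to a linear isomorphism
\[
\varphi \colon Y/T(\CA) \xrightarrow{\sim} V/X.
\]

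The third step is the matching of hyperplanes. By modularity of $X$, the map $Z \mapsto Z \cap Y$ is a lattice isomorphism from the interval $[V,X] = \{Z \in L(\CA): X \subseteq Z\}$ onto $[Y, T(\CA)] = \{Z \in L(\CA): Z \subseteq Y\}$; this is the standard geometric form of the modular complement theorem. Atoms are sent to atoms, and atoms of these two intervals are, respectively, the members of $\CA_X$ and the hyperplanes of the restriction $\CA^Y$. Thus $H \mapsto H \cap Y$ is a bijection $\CA_X \to \CA^Y$. Note this uses modularity essentially; without it, surjectivity onto $\CA^Y$ would fail.

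Finally, to conclude, I would observe that $\varphi$ carries the hyperplane $(H \cap Y)/T(\CA)$ of $\CA^Y/T(\CA)$ to the hyperplane $H/X$ of $\CA_X/X$ directly by the construction of $\varphi$ (both are the image of $H$ under the obvious map). Combined with the bijection from the previous step, this exhibits $\varphi$ as a linear isomorphism of the two arrangements. The principal subtlety, rather than obstacle, is bookkeeping between the two dual roles of modularity: the dimension count that gives $\varphi$ and the lattice isomorphism that gives the hyperplane bijection must be verified to agree, but both arise from the single geometric fact $X + Y = V$, $X \cap Y = T(\CA)$, so compatibility is automatic.
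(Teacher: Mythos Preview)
Your proposal is correct and follows essentially the same route as the paper's proof: choose a complement $Y$, use the natural quotient map $Y/T(\CA)\to V/X$ as the linear isomorphism, and invoke modularity of $X$ to match hyperplanes. The only difference is packaging: the paper constructs $Y$ concretely by extending an independent set $H_1,\ldots,H_q$ with $X=\bigcap H_i$ to one for $T(\CA)$, and obtains the hyperplane bijection directly from the defining property $X+K\in L(\CA)$ (mapping $K\in\CA^Y$ to $X+K\in\CA_X$), whereas you cite the abstract complement existence and deduce the bijection as the atom correspondence under the interval isomorphism $[V,X]\cong[Y,T(\CA)]$; these are two phrasings of the same argument.
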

\begin{proof}
	Let $X = H_1\cap \ldots \cap H_q$. Then there are $H_{q+1},\ldots,H_r\in \CA$
	such that $T(\CA) = H_1\cap \ldots \cap H_r$, i.e.\ $Y = H_{q+1}\cap\ldots\cap H_r$ is
	complementary to $X$.
	Due to the modularity of $X$ for all $K=H'\cap Y \in \CA^Y$, we have $H = X + K \in \CA_X$,
	$H'\cap Y = H\cap Y = K$, and furthermore, this correspondence is bijective. 
	Consequently, the quotient map $Y/T(\CA) \to V/X, v + T(\CA) \mapsto v+X$, 
	which is an isomorphism by the choice of $Y$, maps $\CA^Y/T(\CA)$ to $\CA_X/X$.
\end{proof}

\subsection{Formality and $k$-formality}
\label{ssect:formality}

The notion of formality is due to Falk and Randell \cite{falkrandell:homotopy}.
Here we give the equivalent definition by Brandt and Terao \cite{brandtterao}.
Let $(e_H\mid H\in \CA)$ be the basis of a  $\BBK$-vector space indexed by the hyperplanes in $\CA$.
For each $H\in \CA$, choose a linear form $\alpha_H\in V^*$ such that $\ker \alpha_H=H$. 
Consider the map 
\[
\bigoplus \BBK e_H\rightarrow V^* \quad\text{  defined by  }\quad
e_H\mapsto \alpha_H
\]
and let $F(\CA)$ be the kernel of this map.
Note that $\CA$ is linearly isomorphic to the arrangement 
$\{F(\CA)^\perp\cap \ker x_H\mid H\in \CA\}$ in $F(\CA)^\perp$,
where $(x_H\mid H\in \CA)$ is the dual basis to $(e_H\mid H\in \CA)$. We say an element $x\in F(\CA)$ has \emph{length} $p$ if $x$ has exactly $p$ nonzero entries. Let $F_2(\CA)$ be the subspace of $F(\CA)$ generated by all $x\in F(\CA)$ of length at most $3$. Then, $\CA$ is said to be \emph{formal} if $F_2(\CA)=F(\CA)$. Note that if $r(\CA)=2$, then $F_2(\CA)=F(\CA)$, so each rank $2$ arrangement is formal. As indicated in the introduction,  
a localization of a formal arrangement need not be formal
again, see the example after \cite[Def.~2.3]{yuzvinsky:obstruction}.

The following generalization of formality is due to Brandt and Terao  \cite{brandtterao}. If $X\in L(\CA)$, then there is a natural inclusion $F_2(\CA_X)\hookrightarrow F_2(\CA)$. If $x\in F(\CA)$ is of length $3$, then there is an associated $X\in L_2(\CA)$ with $x\in F_2(\CA_X)\subset F(\CA).$ Thus, $\CA$ is formal if and only if the map
\[
\pi_2: \bigoplus\limits_{X\in L_2(\CA)} F(\CA_X)\longrightarrow F(\CA)
\]
is a surjection. The definition of \emph{$k$-formality} in \cite{brandtterao} is recursive and does not require the choice of linear forms as before. To emphasize the use of this alternative definition, we follow the notation from \cite{brandtterao}, which differs from the one above.

Let $R_0(\CA)=T(\CA)^*$ be the dual space of the center of $\CA$. For each $X\in L(\CA)$, we have a surjective map $i_0(X):R_0(\CA_X)\rightarrow R_0(\CA)$ defined by restricting the domain to $T(\CA)$. Then, for $1\leq k\leq r(\CA)$, define $R_k(\CA)$ recursively  as the kernel of the map
\[
\pi_{k-1}(\CA): \bigoplus\limits_{X\in L_{k-1}}R_{k-1}(\CA_X) \longrightarrow R_{k-1}(\CA),
\]
where the maps $\pi_k$ are defined as follows. 
(Note that $\bigoplus_{X\in L_{0}}R_{0}(\CA_X) = R_0(\Phi_\ell) = V^*$
and $\pi_0 : V^* \to R_0(\CA) = T(\CA)^*$ is just restriction.)
For each $k\geq 1$ and each $Y\in L(\CA)$ with $\rk(Y)\geq k$, there is a map
$$i_k(Y):=i_k(\CA,Y): R_k(\CA_Y) \rightarrow R_k(\CA),$$
defined recursively via the commutative diagram
\begin{equation*}
\label{cd:definitionOfPi}
\begin{CD}
R_k(\CA_Y) @>>> \bigoplus\limits_{\substack{X \in L_{k-1} \\ X\leq Y}} R_{k-1}((\CA_Y)_X) @>\pi_{k-1}(\CA_Y)>> R_{k-1}(\CA_Y) \\
@VV i_k(Y) V			@VV j_{k-1}(Y) V																			@VV i_{k-1}(Y) V	\\
R_k(\CA) @>>> \bigoplus\limits_{X \in L_{k-1}} R_{k-1}(\CA_X) @>\pi_{k-1}(\CA)>>  R_{k-1}(\CA)
\end{CD}
\end{equation*}
In the diagram, $R_k((\CA_Y)_X) = R_k(\CA_X)$ since $X\leq Y$, so $j_{k-1}(Y)$ is defined as the direct sum of identity maps and zero maps. Then $i_k(Y)$ is defined through the universal property of the kernel $R_k(\CA)$. With this define $\pi_k$ as the 
sum of the $i_k(X)$ for $X\in L_k(\CA)$. 

If $\CA$ is formal, we say it is \emph{$2$-formal}. For $k \ge 3$, we say $\CA$ is \emph{$k$-formal} if $\CA$ is $(k-1)$-formal and the map $\pi_k$ is surjective. 
Note that an arrangement of rank $r$ which is $(r-1)$-formal is trivially $k$-formal for all $k \ge r$.

\subsection{Line closure and combinatorial formality}
\label{ssect:lineclosure}

We want to establish further combinatorial properties of $\CA$ that imply formality. 
The following definitions are due to Falk \cite{falk:line-closure} for matroids,
but they can easily be applied to arrangements as well.
Let $\CB \subset \CA$ be a subset of hyperplanes.
We say $\CB$ is \emph{closed} if $\CB =\CA_Y$ for $Y=\bigcap\limits_{H\in \CB} H$.
We call $\CB $ \emph{line-closed} 
if for every pair $H,H'\in \CB $ of hyperplanes, we have $\CA_{H\cap H'}\subset \CB $. 
The \emph{line-closure} $\lc(\CB)$ of $\CB $ is defined 
as the intersection of all line-closed subsets of $\CA$ containing $\CB$. 
The arrangement $\CA$ is called \emph{line-closed} 
if every line-closed subset of $\CA$ is closed.
With these notions, we have the following criterion for combinatorial formality, see \cite[Cor.~3.8]{falk:line-closure}:
\begin{proposition}
	\label{prop:lcbasis}
		Let $\CA$ be an arrangement of rank $r$. Suppose $\CB \subseteq \CA$ consists of $r$ hyperplanes such that $r(\CB)=r$ and $\lc(\CB)=\CA$. Then $\CA$ is combinatorially formal.
\end{proposition}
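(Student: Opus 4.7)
The plan is to construct an explicit basis of $F(\CA)$ consisting of length-$3$ vectors, obtained by exhibiting $\CA$ as an iterated line closure of $\CB$ one hyperplane at a time. A dimension count then forces $F_2(\CA) = F(\CA)$.

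First I would observe that, since $\CB$ consists of exactly $r$ hyperplanes and $r(\CB) = r = r(\CA)$, the linear forms $\{\alpha_H \mid H \in \CB\}$ are a basis of $T(\CA)^\perp = \operatorname{span}\{\alpha_H \mid H \in \CA\}$. In particular, with $n := |\CA|$, one has $\dim F(\CA) = n - r$, and $F(\CB) = 0$.

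Next I would enumerate $\CA \setminus \CB$ as $\{H_{r+1}, \ldots, H_n\}$ in such a way that, for every $i > r$, there exist $H'_i, H''_i \in \CB_{i-1} := \CB \cup \{H_{r+1}, \ldots, H_{i-1}\}$ with $r(H'_i \cap H''_i) = 2$ and $H'_i \cap H''_i \subseteq H_i$. Such an ordering exists: if at some stage $\CB_{i-1} \subsetneq \CA$ admitted no admissible successor, then $\CB_{i-1}$ would already be line-closed, hence would contain $\lc(\CB) = \CA$, a contradiction. Setting $X_i := H'_i \cap H''_i \in L_2(\CA)$, the three distinct hyperplanes $H_i, H'_i, H''_i$ all lie in $\CA_{X_i}$, and the unique dependence $\alpha_{H_i} = c' \alpha_{H'_i} + c'' \alpha_{H''_i}$ has $c', c'' \ne 0$ (otherwise $H_i$ would coincide with $H'_i$ or $H''_i$). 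This produces the length-$3$ vector
\[
\xi_i := e_{H_i} - c'\, e_{H'_i} - c''\, e_{H''_i} \;\in\; F(\CA_{X_i}) \subseteq F_2(\CA).
\]

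Since $e_{H_i}$ appears with coefficient $1$ in $\xi_i$ but does not appear in any $\xi_j$ for $j < i$, the family $(\xi_{r+1}, \ldots, \xi_n)$ is linearly independent, giving $n - r = \dim F(\CA)$ linearly independent vectors inside $F_2(\CA)$. Therefore $F_2(\CA) = F(\CA)$ and $\CA$ is formal. Combinatorial formality is then immediate, since both hypotheses---$|\CB| = r$ with $r(\CB) = r$, and $\lc(\CB) = \CA$---are purely combinatorial: the rank function and the line-closure operator are read off $L(\CA)$ alone. Thus any arrangement $\CA'$ with $L(\CA') \cong L(\CA)$ has a corresponding subset $\CB' \subseteq \CA'$ satisfying the same assumptions, and the construction carries over verbatim. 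The main (and rather minor) obstacle is the existence of the enumeration, which rests on the observation that no proper subset of $\CA$ properly containing $\CB$ can be line-closed.
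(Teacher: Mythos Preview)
Your argument is correct. The paper does not actually prove this proposition: it is quoted verbatim from Falk's article \cite[Cor.~3.8]{falk:line-closure}, with no proof supplied here. So there is no in-paper proof to compare against, but your self-contained treatment is a genuine addition relative to the present text.

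The mechanism of your proof is the standard one and matches Falk's: filter $\CA$ by the chain $\CB = \CB_r \subsetneq \CB_{r+1} \subsetneq \cdots \subsetneq \CB_n = \CA$, where each step adjoins a single hyperplane lying over a rank-$2$ flat spanned by two hyperplanes already present. The only point worth making explicit is that the ``no admissible successor'' step really does force $\CB_{i-1}$ to be line-closed; you state this, and it is immediate from the definition, since a pair $H'=H''$ contributes only $\{H'\}\subseteq\CB_{i-1}$, while a pair $H'\neq H''$ with $\CA_{H'\cap H''}\not\subseteq\CB_{i-1}$ furnishes the next $H_i$. The triangular system $(\xi_{r+1},\ldots,\xi_n)$ then gives exactly $n-r=\dim F(\CA)$ independent length-$3$ relations, and combinatorial formality follows because rank, the localizations $\CA_{H\cap H'}$, and hence the line-closure operator, are all read off $L(\CA)$.
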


We call such a subset $\CB \subseteq \CA$ as in Proposition \ref{prop:lcbasis} an \emph{lc-basis} of $\CA$. Note that the converse of Proposition \ref{prop:lcbasis} is false \cite{falk:line-closure}, i.e.~a combinatorially formal $\CA$ need not admit an lc-basis. 

Furthermore, the existence of an lc-basis does not imply higher formality. Brandt and Terao give an example of an arrangement that is $2$-formal but not $3$-formal, see \cite[Ex.~5.1]{brandtterao}. Here an lc-basis is easily calculated, e.g.~take $\{H_1,H_2,H_4,H_5\}$ in \cite[Ex.~5.1]{brandtterao}.

Finally, we note that the presence of an lc-basis does not descend to localizations, i.e.~if $\CB \subseteq \CA$ is as in Proposition \ref{prop:lcbasis} and $X \in L(\CA)$, then it need not be the case that $\CB_X$ is an lc-basis of $\CA_X$, see Example \ref{ex:3formal-not-hereditiary}.

\subsection{$K(\pi,1)$-arrangements}
\label{ssect:kpionearrangements}
A complex $\ell$-arrangement $\CA$ is called  \emph{aspherical}, or a 
\emph{$K(\pi,1)$-arrangement} (or that $\CA$ is $K(\pi,1)$ for short), provided 
the complement $M(\CA)$ of the union of the hyperplanes in 
$\CA$ in $\BBC^\ell$ is aspherical, i.e.~is a 
$K(\pi,1)$-space. That is, the universal covering space of $M(\CA)$ 
is contractible and the fundamental group
$\pi_1(M(\CA))$ of $M(\CA)$ is isomorphic to the group $\pi$.
This is an important 
topological property, for 
the cohomology ring $H^*(X, \BBZ)$ of a $K(\pi,1)$-space $X$
coincides
with the group cohomology $H^*(\pi, \BBZ)$ of $\pi$.
The crucial point here is that the intersections of codimension $2$  
determine the fundamental group $\pi_1(M(\CA))$ of $M(\CA)$. 

By Deligne's seminal result \cite{deligne}, complexified simplicial arrangements are $K(\pi, 1)$.
Likewise for complex supersolvable arrangements, 
cf.~\cite{falkrandell:fiber-type} and \cite{terao:modular}
(cf.~\cite[Prop.\ 5.12, Thm.~5.113]{orlikterao:arrangements}).
As restrictions of simplicial (resp.~supersolvable)
arrangements are again simplicial 
(resp.~supersolvable), 
the $K(\pi, 1)$-property of these kinds of arrangements
is inherited by their restrictions.
However, we emphasize that in general, a restriction of a 
$K(\pi, 1)$-arrangement need not be $K(\pi, 1)$ again, see 
\cite{amendmoellerroehrle:aspherical} for examples of this kind.

The following theorem due to Falk and Randell \cite[Thm.\ 4.2]{falkrandell:homotopy} 
establishes formality as a necessary condition for asphericity.

\begin{theorem}
	\label{thm:kpi1formal}
	If $\CA$ is a $K(\pi,1)$-arrangement, then it is formal.
\end{theorem}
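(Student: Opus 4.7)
My plan is to exploit the interplay between two descriptions of the cohomology of $M(\CA)$: on the one hand, the fundamental group $\pi := \pi_1(M(\CA))$ is controlled entirely by codimension-$2$ combinatorial data, while on the other, asphericity forces the entire cohomology algebra of $M(\CA)$ to be a group-cohomological invariant of $\pi$. Since the Orlik--Solomon cohomology generally encodes the full matroid of $\CA$, comparing these two viewpoints will force the higher-rank dependencies to already be generated by the rank-$2$ ones, i.e.\ $F(\CA) = F_2(\CA)$.

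First, I would invoke the Zariski--van Kampen theorem in the form adapted to arrangements by Randell: $\pi$ admits a presentation $\pi = F_n/R$, where $F_n$ is the free group on meridian generators $\gamma_H$, one for each $H\in\CA$, and the relations $R$ are commutator-type words attached to the codimension-$2$ flats $X \in L_2(\CA)$. Consequently every group-cohomological invariant of $\pi$, and in particular $H^*(\pi;\BBK)$, depends only on $L_2(\CA)$. Since $M(\CA)$ is a classifying space for $\pi$, the Eilenberg--MacLane theorem furnishes a canonical isomorphism $H^*(M(\CA);\BBK) \cong H^*(\pi;\BBK)$, so the entire graded cohomology algebra of $M(\CA)$ is determined by the rank-$2$ lattice data.

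On the other side, the Orlik--Solomon theorem identifies $H^*(M(\CA);\BBK)$ with the OS-algebra $A(\CA)$, which has a presentation by degree-$1$ generators $e_H$ modulo relations indexed by the circuits of the underlying matroid: a circuit of size $k$ contributes a relation in degree $k-1$. If $\CA$ were not formal, there would exist a dependency in $F(\CA)\setminus F_2(\CA)$ corresponding to a circuit of length $\ge 4$ whose dependence is not a consequence of length-$3$ ones; this would produce a nontrivial OS-relation in $A(\CA)$ of degree $\ge 3$ not forced by the degree-$2$ relations, making $A(\CA)$ carry strictly more structure than what is encoded in $L_2(\CA)$. Combined with the previous paragraph this yields a contradiction, forcing $F(\CA) = F_2(\CA)$ and hence formality.

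The main obstacle, and the crux of the proof, is the last implication: rigorously verifying that an element of $F(\CA)\setminus F_2(\CA)$ genuinely witnesses a relation in $A(\CA)$ beyond those already implied by the rank-$2$ ones. This can be handled via a dimension count in terms of the broken-circuit basis of $A(\CA)$, or, following the original argument of Falk and Randell, through an explicit application of Hopf's formula
\[
H_2(\pi;\BBZ) = (R\cap [F_n,F_n])/[F_n,R],
\]
comparing its output in low degrees with the Orlik--Solomon computation of $H_2(M(\CA))$, so that the failure of formality becomes visible as a genuine discrepancy between the two sides.
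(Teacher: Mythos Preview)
The paper does not prove this theorem itself; it is stated with a citation to Falk and Randell \cite[Thm.~4.2]{falkrandell:homotopy}. So there is no in-paper proof to compare against, and your proposal must stand on its own.

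Your overall strategy---exploit $K(\pi,1)$ to equate $H^*(M(\CA))$ with $H^*(\pi)$, then argue that the latter is constrained by codimension-$2$ data while the Orlik--Solomon algebra sees the full matroid---is the right shape, but the argument as written has a genuine error at the first step and a gap at the last.

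The error: it is not true that $\pi_1(M(\CA))$ is determined up to isomorphism by $L_2(\CA)$. The Randell/van~Kampen presentation has relations \emph{indexed} by rank-$2$ flats, but the actual relator words depend on the braid monodromy, which is geometric and not a function of $L_2$ alone; indeed Rybnikov exhibited arrangements with isomorphic intersection lattices (hence identical $L_2$) and non-isomorphic fundamental groups. So your conclusion ``$H^*(\pi;\BBK)$ depends only on $L_2(\CA)$'' does not follow, and the intended contradiction with the Orlik--Solomon side cannot be set up in this form. Relatedly, the degree-$2$ Hopf comparison you propose cannot detect non-formality, since $b_2(M(\CA))$ is already determined by $L_2$.

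What the actual Falk--Randell argument supplies in place of your first step is a specific geometric comparison. One constructs the \emph{formalization} $\CA^f$---the arrangement in $F_2(\CA)^\perp$ cut out by the same coordinate hyperplanes---which has the same rank-$2$ structure as $\CA$ but strictly larger rank when $\CA$ is not formal. The inclusion $M(\CA)\hookrightarrow M(\CA^f)$ is a generic linear section, and a Lefschetz/Zariski-type theorem for complements gives the needed isomorphism on $\pi_1$ together with control on higher homotopy; the contradiction then comes from a cohomological-dimension comparison (an essential rank-$r$ arrangement has top nonvanishing cohomology in degree exactly $r$). Your sketch never introduces $\CA^f$ or the Lefschetz input, and without them the comparison you want cannot be made precise.
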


Thanks to \cite{cuntzgeiss},
simpliciality is a combinatorial property.
Thus, as simplicial arrangements are $K(\pi,1)$,  
it follows that all such are combinatorially formal, see also \S \ref{ssec:SimplicialArrs}. 
It is not known whether this is true for aspherical arrangements in general. 

\begin{remark}
	\label{rem:local-kpione}
	Thanks to an observation by Oka, 
	if the complex arrangement $\CA$ is $K(\pi, 1)$, 
	then so is every localization $\CA_X$ for $X \in L(\CA)$, 
	e.g., see \cite[Lem.~1.1]{paris:deligne}.
\end{remark}

The following is an immediate consequence of Terao's work 
\cite{terao:modular} (see also \cite[\S 5.5]{orlikterao:arrangements}).

\begin{lemma}
	\label{lem:modular}
	Let $\CA$ be a complex arrangement of rank $r$.
	Suppose that $X \in L(\CA)$ is modular of rank $r-1$.
	If $\CA_X$ is $K(\pi,1)$, then so is $\CA$.
\end{lemma}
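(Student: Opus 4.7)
The plan is to invoke Terao's modular fibration theorem for complex arrangements and to extract asphericity of $M(\CA)$ from the long exact sequence of homotopy groups. First, by Lemma~\ref{lem:modular2}, modularity of $X$ furnishes a complementary rank-one flat $Y \in L(\CA)$ together with the linear isomorphism $\CA^Y/T(\CA) \cong \CA_X/X$. Passing to the essentialization, this yields a direct sum decomposition $V/T(\CA) \cong (X/T(\CA)) \oplus (Y/T(\CA))$ in which $X/T(\CA)$ is one-dimensional while $Y/T(\CA)$ has dimension $r-1$.

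Second, I would apply Terao's modular fibration theorem (\cite{terao:modular}; see also \cite[\S 5.5]{orlikterao:arrangements}). Modularity of $X$ is precisely the hypothesis guaranteeing that the linear projection $V/T(\CA) \to Y/T(\CA)$ along $X/T(\CA)$ restricts on complements to a locally trivial fiber bundle
\[
F \hookrightarrow M(\CA) \longrightarrow M(\CA^Y),
\]
whose fiber $F$ is a complex affine line minus finitely many points (the intersections with the hyperplanes of $\CA \setminus \CA_X$ meeting a generic fiber), and whose base $M(\CA^Y)$ is homotopy equivalent to $M(\CA_X)$ via the isomorphism supplied by Lemma~\ref{lem:modular2}.

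By hypothesis $M(\CA_X)$ is $K(\pi,1)$, and $F$ is a punctured complex line, hence also $K(\pi,1)$ (it is homotopy equivalent to a bouquet of circles). The homotopy long exact sequence of the fibration then forces $\pi_n(M(\CA)) = 0$ for all $n \geq 2$, showing that $\CA$ is $K(\pi,1)$. The principal obstacle is of course verifying that the projection is actually a locally trivial bundle, as this is where modularity enters essentially---it ensures that the number of points removed from each fiber is constant across the base. This is precisely the content of Terao's theorem; once cited, the homotopy exact sequence argument is formal.
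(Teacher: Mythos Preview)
Your proposal is correct and follows the same approach as the paper: the paper does not actually give a proof but simply states that the lemma is an immediate consequence of Terao's work \cite{terao:modular} (cf.\ \cite[\S 5.5]{orlikterao:arrangements}), and your argument spells out precisely that fibration-plus-homotopy-long-exact-sequence reasoning. The detour through Lemma~\ref{lem:modular2} to identify the base with $M(\CA^Y)$ is not strictly needed---Terao's theorem already gives $M(\CA_X/X)$ (equivalently $M(\CA_X)$ up to the essentialization) as the base directly---but it is harmless.
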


\subsection{Simplicial arrangements}
\label{ssec:SimplicialArrs}

Let $\CA$ be a real arrangement in $V \cong \BBR^\ell$. Then, the connected components of the complement $V \setminus \bigcup_{H \in \CA}H$
are called chambers and are denoted by $\CC(\CA)$.
The arrangement $\CA$ is called \emph{simplicial} if all $C \in \CC(\CA)$ are (open) simplicial cones.
It was already mentioned above that simplicial arrangements are aspherical and therefore formal.
Furthermore, simpliciality is actually a combinatorial property of the intersection lattice, as was observed in \cite{cuntzgeiss}.
Consequently, the formality of simplicial arrangements is combinatorial.

In Proposition \ref{prop:WallsLCBasis}  below, we demonstrate that a simplicial arrangement naturally satisfies the stronger property
of having an $\lc$-basis. Explicitly, owing to Proposition \ref{prop:WallsLCBasis}, 
the walls of each chamber of a simplicial arrangement yield such a special basis.

For a chamber $C \in \CC(\CA)$, we write $\CW^C := \{H \in \CA \mid \langle \overline{C}\cap H\rangle = H\}$ for the \emph{walls} of $C$.
Two chambers $C,D \in \CC(\CA)$ are \emph{adjacent} if $\langle \overline{C}\cap\overline{D}\rangle \in \CA$.

The following lemma, which is a special case of \cite[Lem.~3.3]{CM2019supersimp},
provides the essential argument for the proof of Proposition \ref{prop:WallsLCBasis} below.

\begin{lemma}
	\label{lem:SimplAdjK}
	Let $C,D \in \CC(\CA)$ be two adjacent and simplicial chambers with a common wall $H \in \CW^C\cap \CW^{D}$, and let $H' \in \CW^{D} \setminus \{H\}$.
	Then, $H' \in \CW^{C}$ if and only if $|\CA_{H\cap H'}| = 2$.
\end{lemma}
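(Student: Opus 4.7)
The plan is to localize the picture at a generic point of the codimension-$2$ face $F := \overline{D}\cap H\cap H'$ and reduce the equivalence to an elementary statement about central line arrangements in the plane.

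Using simpliciality of $D$ together with $H,H'\in \CW^D$, the subset $F$ is a codimension-$2$ face of $D$, hence $(\ell-2)$-dimensional and so spans the rank-$2$ flat $H\cap H'$. Since $C$ and $D$ are adjacent across $H$, meaning $\langle \overline{C}\cap \overline{D}\rangle = H$, the chamber of the induced arrangement on $H$ lying between $\overline{C}$ and $\overline{D}$ is unique, yielding $\overline{C}\cap H = \overline{D}\cap H = \overline{C}\cap \overline{D}$, so in particular $F\subseteq \overline{C}$ as well. I would then fix a generic point $p\in F$ lying in no hyperplane outside $\CA_{H\cap H'}$ and pass to a $2$-dimensional affine transversal to $H\cap H'$ at $p$. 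In this transversal the arrangement $\CA$ restricts to the central planar arrangement cut out by $\CA_{H\cap H'}$; the sector of $D$ is bounded by the images of $H$ and $H'$, these being the two walls of $D$ meeting $F$, and the sector of $C$ is the reflection of $D$'s across the image of $H$, since $C$ is the unique chamber of $\CA$ meeting $D$ across the wall $H$.

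For $(\Leftarrow)$, if $|\CA_{H\cap H'}|=2$, the transversal contains only the two lines $H$ and $H'$, producing four sectors, and both $D$'s sector and its reflection $C$ across $H$ are bounded by $H'$; hence $\overline{C}\cap H'$ is $(\ell-1)$-dimensional near $p$, giving $\langle \overline{C}\cap H'\rangle = H'$, i.e.\ $H'\in \CW^C$. For $(\Rightarrow)$, if $H'\in \CW^C$ then by the same codimension-$2$ face argument applied to the simplicial chamber $C$, the sector of $C$ in the transversal is also bounded by $H$ and $H'$; but any further line in the transversal coming from some $H'' \in \CA_{H\cap H'}\setminus\{H,H'\}$ would pass through two opposite quadrants determined by $H$ and $H'$ and hence cut either $C$ or $D$, contradicting that both are chambers of $\CA$. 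Therefore $|\CA_{H\cap H'}|=2$.

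The main obstacle I anticipate is the careful bookkeeping required to translate between local sector-boundaries in the transversal and the global wall condition $\langle \overline{C}\cap H'\rangle = H'$. The simpliciality of both $C$ and $D$ is essential for this translation: it ensures that the codimension-$2$ faces $\overline{D}\cap H\cap H'$ and, in the reverse direction, $\overline{C}\cap H\cap H'$ are genuinely $(\ell-2)$-dimensional and share an open subset of $H\cap H'$, so that a single generic point $p$ can be chosen at which the local two-dimensional analysis faithfully captures the wall structure of both chambers.
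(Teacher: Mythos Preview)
Your argument is correct. The paper itself does not give a proof of this lemma; it simply records it as a special case of \cite[Lem.~3.3]{CM2019supersimp} (Cuntz--M\"ucksch) and moves on. What you supply is a self-contained geometric proof via localization at the codimension-$2$ face $F=\overline{D}\cap H\cap H'$ and reduction to a planar central arrangement, which is exactly the natural way to see the statement directly.

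Two small points of phrasing. First, calling $C$'s sector ``the reflection of $D$'s across the image of $H$'' is not literally accurate in a general planar central arrangement; what you need (and use) is only that $C$'s local sector is the one adjacent to $D$'s across the ray of $H$. Second, in the $(\Rightarrow)$ direction you should state explicitly that $\overline{C}\cap H\cap H'$ and $\overline{D}\cap H\cap H'$ are not merely overlapping but \emph{equal}: since $\overline{C}\cap H=\overline{D}\cap H$, intersecting with $H'$ gives $F=\overline{C}\cap H\cap H'=\overline{D}\cap H\cap H'$, so the same generic point $p$ witnesses both local sectors simultaneously. With that, your half-plane argument (that $\overline{C}\cup\overline{D}$ locally fills a closed half-plane bounded by $H'$, forcing any third line to cut $C$ or $D$) goes through cleanly. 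The advantage of your route is that it is entirely elementary and avoids importing the more general machinery of \cite{CM2019supersimp}; the paper's route simply defers to that reference.
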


We require a bit more notation, cf.\ \cite[Sec.~2.2]{CM2019supersimp}.
A sequence $(C_0,C_1,\ldots,$ $C_{n-1},C_n)$ of distinct chambers in $\CC(\CA)$ is called a \emph{gallery} if
for all $1 \leq i \leq n$ the chambers $C_i$ and $C_{i-1}$ are adjacent. 
The set of all galleries is denoted by $\Gc(\CA)$.
The \emph{length} $l(G)$ of  a gallery $G \in \Gc(\CA)$ is one less than the number of chambers in $G$.
For a gallery $G = (C_0,\ldots,C_n)$, we denote by $b(G) = C_0$ the first chamber and by $t(G) = C_n$ the last chamber in $G$.
For two chambers $C,D \in \CC(\CA)$, we set $\dist(C,D) := \min\{l(G) \mid G \in \Gc(\CA), b(G)=C$ and $t(G)=D\}$, and 
for $H \in \CA$, we set $\dist(H,C) := \min\{l(G) \mid b(G)=C, t(G)=D$ and $H \in \CW^{D} \}$.

\begin{proposition}
	\label{prop:WallsLCBasis}
	Let $\CA$ be a simplicial arrangement. Then, for each $C \in \CC(\CA)$ we have $\lc(\CW^C) = \CA$.
\end{proposition}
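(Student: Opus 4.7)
The plan is to show by induction on $d := \dist(H,C)$ that every hyperplane $H \in \CA$ lies in $\lc(\CW^C)$. The base case $d=0$ is immediate from the definition of $\CW^C$.

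For the inductive step, fix $H$ with $\dist(H,C) = n \geq 1$, choose a chamber $D$ realizing this distance, and take a minimal gallery $C = C_0, C_1, \ldots, C_n = D$. Let $K := \langle \overline{C_{n-1}} \cap \overline{C_n}\rangle$ be the common wall of $C_{n-1}$ and $D$. Since $K \in \CW^{C_{n-1}}$, the truncated gallery $C_0,\ldots,C_{n-1}$ shows $\dist(K,C) \leq n-1$, so $K \in \lc(\CW^C)$ by the inductive hypothesis. If $H$ were also a wall of $C_{n-1}$, the same truncated gallery would give $\dist(H,C) \leq n-1$, contradicting the choice of $D$. Thus $H \in \CW^{D} \setminus \CW^{C_{n-1}}$, and Lemma \ref{lem:SimplAdjK} applied to the adjacent simplicial chambers $C_{n-1}, D$ with common wall $K$ yields $|\CA_{H \cap K}| \geq 3$.

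The heart of the argument is to produce a second hyperplane through $L := H \cap K$ that already lies in $\lc(\CW^C)$. For this I would work with the rank-two localization $\CA_L$: passing to $V/L$ one obtains a central line arrangement, and the sector containing $C_{n-1}$ is bounded by exactly two of these lines because $C_{n-1}$ is simplicial. One of these bounding lines is the image of $K$, and the other comes from a wall $H''$ of $C_{n-1}$ with $L \subset H''$. In the cyclic picture around $L$, the two walls of $D$ through $L$ are $K$ and its neighbor on the $D$-side (namely $H$), while those of $C_{n-1}$ are $K$ and its neighbor on the opposite side (namely $H''$); since $|\CA_L| \geq 3$, these two neighbors are distinct, forcing $H'' \neq H$. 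Hence $H'' \in \CW^{C_{n-1}}$ satisfies $\dist(H'',C) \leq n-1$, and the induction hypothesis gives $H'' \in \lc(\CW^C)$.

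To conclude, both $K$ and $H''$ lie in the line-closed set $\lc(\CW^C)$, so $\CA_L = \CA_{K \cap H''} \subseteq \lc(\CW^C)$, and in particular $H \in \lc(\CW^C)$. The main obstacle is the third paragraph, namely producing the companion wall $H''$ and verifying $H'' \neq H$; both points rely essentially on the simpliciality of $C_{n-1}$ (to have only two walls through $L$) together with the bound $|\CA_L| \geq 3$ supplied by Lemma \ref{lem:SimplAdjK}.
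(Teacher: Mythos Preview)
Your proof is correct and follows essentially the same approach as the paper's: induction on $\dist(H,C)$, passing to the penultimate chamber $C_{n-1}$ of a minimal gallery (the paper's $E$) and using Lemma~\ref{lem:SimplAdjK} together with a rank-$2$ localization to find a second wall $H''\in\CW^{C_{n-1}}$ through $L=H\cap K$. The only cosmetic difference is that the paper applies the induction hypothesis to all of $\CW^{C_{n-1}}$ at once and cites the lemma for the existence of $H''$, while you unpack the sector argument in $V/L$; the underlying reason this works---that the codim-$2$ face of $D$ supported by $L$ lies in the shared facet and hence is a codim-$2$ face of the simplicial cone $C_{n-1}$, forcing a second wall of $C_{n-1}$ through $L$---is the same in both.
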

\begin{proof}
	Let $H \in \CA$.
	We argue by induction on $\dist(H,C)$.
	
	Firstly, we have $\dist(H,C)=0$ if and only if $H \in \CW^C$. Hence, $H \in \lc(\CW^C)$ in this case.
	
	Now suppose $\dist(H,C)>0$.
	For the induction step, let $D \in \CC(\CA)$ be such that $\dist(H,C) = \dist(C,D)$ and $H \in \CW^{D}$.
	There is a chamber $E \in \CC(\CA)$ adjacent to $D$ with $\dist(C,E) < \dist(C,D)$.
	By the induction hypothesis, we see that $\CW^{E} \subseteq \lc(\CW^C)$.
	Now, set $H' := \langle\overline{D}\cap\overline{E}\rangle$.
	By considering the distances, we apparently have $H \in \CW^{D} \setminus \CW^{E}$ and consequently, by Lemma \ref{lem:SimplAdjK},
	there is another $H'' \in  \CW^{E} \setminus \{H'\}$ such that $H \in \CA_{H' \cap H''}$,
	i.e.\ $H \in \lc(\CW^{E}) \subseteq \lc(\CW^C)$, which concludes the induction.
	\end{proof}

In view of Proposition \ref{prop:lcbasis}, 
Proposition \ref{prop:WallsLCBasis} again shows that simplicial arrangements are combinatorially formal.

\subsection{Free hyperplane arrangements}
\label{ssect:free}
Let $S = S(V^*)$ be the symmetric algebra of the dual space $V^*$ of $V$.
Let $\Der(S)$ be the $S$-module of $\BBK$-derivations of $S$.
Since $S$ is graded, 
$\Der(S)$ is a graded $S$-module.

Let $\CA$ be an arrangement in $V$. 
For $H \in \CA$, we fix $\alpha_H \in V^*$ with
$H = \ker \alpha_H$.
The \emph{defining polynomial} $Q(\CA)$ of $\CA$ is given by 
$Q(\CA) := \prod_{H \in \CA} \alpha_H \in S$.
The \emph{module of $\CA$-derivations} is 
defined by 
\[
D(\CA) := \{\theta \in \Der(S) \mid \theta(Q(\CA)) \in Q(\CA) S\} .
\]
We say that $\CA$ is \emph{free} if 
$D(\CA)$ is a free $S$-module, cf.\ \cite[\S 4]{orlikterao:arrangements}.

\begin{remark}
	\label{rem:local-free}
Note that the class of free arrangements is closed with respect to taking 
localizations, 
cf.~\cite[Thm.\ 4.37]{orlikterao:arrangements}. 
For  $X\in L(\CA)$ modular of rank $r-1$, 
also the converse holds, e.g.~see \cite[Rem.~2.8]{roehrle:ideal}.
\end{remark}

In \cite[Cor.~2.5]{yuzvinsky:obstruction}, Yuzvinsky showed that freeness entails formality. This was extended to $k$-formality by Brandt and Terao in \cite[Thm.~4.15]{brandtterao}:

\begin{theorem}
	\label{thm:brandtterao}
	If $\CA$ is free, then it is $k$-formal for each $k$.
\end{theorem}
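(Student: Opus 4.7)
The plan is to argue by double induction, on the formality level $k$ and, within the inductive step, on the rank $r = r(\CA)$. The base case $k=2$ is Yuzvinsky's theorem from \cite{yuzvinsky:obstruction}, which says freeness implies formality, and the base case in rank uses the observation from the end of \S \ref{ssect:formality} that a rank-$r$ arrangement is automatically $k$-formal once $k \geq r$.

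For the inductive step, suppose $\CA$ is free and $(k-1)$-formal with $2 \leq k < r$. By Remark \ref{rem:local-free}, every localization $\CA_X$ is again free, and whenever $X \neq T(\CA)$ it has strictly smaller rank, so by the rank induction each such $\CA_X$ is $k$-formal. It remains to establish surjectivity of
$$\pi_k \colon \bigoplus_{X \in L_k(\CA)} R_k(\CA_X) \longrightarrow R_k(\CA).$$

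The key step is to assemble the modules $\bigoplus_{X \in L_j(\CA)} R_j(\CA_X)$ for varying $j$ into a chain complex, with differentials built from the maps $\pi_j$ and $i_j(Y)$ in the commutative diagram of \S \ref{ssect:formality}, and to show that freeness of $D(\CA)$ forces the relevant portion of this complex to be exact. The intended mechanism is to exploit a homogeneous basis $\theta_1, \ldots, \theta_\ell$ of the free $S$-module $D(\CA)$ to construct explicit lifts, equivalently a contracting homotopy in the complex. In more homological language, one seeks to identify $R_k(\CA)$ with the $k$-th syzygy of a module naturally built from $V^*$ and the center $T(\CA)^*$; then the Auslander--Buchsbaum formula, together with the free resolution furnished by a basis of $D(\CA)$, forces the required Ext-vanishing and hence surjectivity of $\pi_k$.

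The principal obstacle is reconciling the recursive combinatorial definition of $R_k(\CA)$ via the maps $i_k(Y)$ and $j_{k-1}(Y)$ with the concrete algebraic information supplied by freeness of $D(\CA)$: these are phrased in rather different languages, and the bulk of the work lies in identifying a single homological object from which surjectivity of $\pi_k$ and freeness of derivations can both be read off, and then propagating this identification through the induction. This is the technical heart of the Brandt--Terao argument, and is where I would expect most of the effort to be concentrated.
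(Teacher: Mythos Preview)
The paper does not prove this theorem. It is stated in \S\ref{ssect:free} as a result from the literature, attributed to Brandt and Terao \cite[Thm.~4.15]{brandtterao} (extending Yuzvinsky \cite[Cor.~2.5]{yuzvinsky:obstruction}), and no proof is given or sketched. So there is nothing in the paper to compare your argument against.

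That said, your proposal is not a proof but an outline of a hoped-for proof, and you say so yourself: the final paragraph concedes that the ``technical heart'' --- identifying a homological object linking the recursive definition of $R_k(\CA)$ to freeness of $D(\CA)$ --- has not been carried out. Everything before that paragraph is setup. In particular, the double induction as stated does no real work: knowing that every proper localization $\CA_X$ is $k$-formal (by the rank induction) tells you that each $\pi_k(\CA_X)$ is surjective, but surjectivity of the global map $\pi_k(\CA)\colon \bigoplus_{X\in L_k(\CA)} R_k(\CA_X) \to R_k(\CA)$ is a separate question about how the local pieces glue, and nothing you wrote addresses it. The phrases ``construct explicit lifts, equivalently a contracting homotopy'' and ``the Auslander--Buchsbaum formula \ldots forces the required Ext-vanishing'' are gestures in plausible directions, but no module, no resolution, and no Ext group are actually written down, and it is not clear that the $R_k(\CA)$, which are finite-dimensional $\BBK$-vector spaces rather than $S$-modules, fit into an Auslander--Buchsbaum argument in the way you suggest without substantial further translation.

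In short: the paper simply cites \cite{brandtterao}, and your proposal correctly locates the genre of the argument (a chain complex built from the relation spaces, with freeness forcing exactness) but does not supply any of its content.
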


Note that the converse of 
Theorem \ref{thm:brandtterao} is false, e.g.~see Example \ref{ex:3formal-not-hereditiary}. 

Following DiPasquale \cite[Def.~4.9]{DiPasquale:freeness}, we say that $\CA$ is \emph{totally formal} provided $\CA$ is locally $k$-formal for all $k$.
Thanks to 
\cite[Thm.\ 4.37]{orlikterao:arrangements} and 
Theorem \ref{thm:brandtterao}, if $\CA$ is free, it is totally formal.
Using \cite[Cor.~4.10]{DiPasquale:freeness}, it is easy to 
see that this property of $\CA$ is passed to the restriction $\CA^H$
of $\CA$ to a hyperplane $H$.
For, owing to \cite[Thm.~11]{ziegler:multiarrangements}, the Ziegler multiplicity 
is free on $\CA^H$ and so \cite[Cor.~4.10]{DiPasquale:freeness} shows that 
$\CA^H$ is totally formal.
Note that Corollary \ref{cor:restrition-tot-formal} is valid without the freeness requirement on $\CA$ and gives local formality for any restriction $\CA^H$.

\subsection{Factored arrangements}
\label{ssect:factored}
The notion of a \emph{factored} 
arrangement is due to Terao \cite{terao:factored}.
It generalizes the concept of a supersolvable arrangement, see
\cite[Thm.\ 5.3]{orliksolomonterao:hyperplanes} and 
\cite[Prop.\ 2.67, Thm.\ 3.81]{orlikterao:arrangements}.
factoredness is a combinatorial property
and provides a 
general combinatorial framework to 
deduce tensor factorizations of the underlying Orlik-Solomon algebra,
see also \cite[\S 3.3]{orlikterao:arrangements}.
We recall the relevant notions  
from \cite{terao:factored}
(cf.\  \cite[\S 2.3]{orlikterao:arrangements}):

\begin{defn}
\label{def:factored}
Let $\pi = (\pi_1, \ldots , \pi_s)$ be a partition of $\CA$.
\begin{itemize}
\item[(a)]
$\pi$ is called \emph{independent}, provided 
for any choice $H_i \in \pi_i$ for $1 \le i \le s$,
the resulting $s$ hyperplanes are linearly independent, i.e.\
$r(H_1 \cap \ldots \cap H_s) = s$.
\item[(b)]
Let $X \in L(\CA)$.
The \emph{induced partition} $\pi_X$ of $\CA_X$ is given by the non-empty 
blocks of the form $\pi_i \cap \CA_X$.
\item[(c)]
$\pi$ is a \emph{factorization} of $\CA$  provided 
\begin{itemize}
\item[(i)] $\pi$ is independent, and 
\item[(ii)] for each $X \in L(\CA) \setminus \{V\}$, the induced partition $\pi_X$ admits a block 
which is a singleton. 
\end{itemize}
\end{itemize}
If $\CA$ admits a factorization, then we also say that $\CA$ is \emph{factored}.
\end{defn}

We record some consequences of the main results from \cite{terao:factored} 
(cf.\  \cite[\S 3.3]{orlikterao:arrangements}). Here $A(\CA)$ denotes the \emph{Orlik-Solomon algebra} of $\CA$.

\begin{corollary}
	\label{cor:teraofactored}
	Let  $\pi = (\pi_1, \ldots, \pi_s)$ be a factorization of $\CA$.
	Then the following hold:
	\begin{itemize}
		\item[(i)] $s = r = r(\CA)$ and 
		\[
		\Poin(A(\CA),t) = \prod_{i=1}^r (1 + |\pi_i|t);
		\]
		\item[(ii)]
		the multiset $\{|\pi_1|, \ldots, |\pi_r|\}$ only depends on $\CA$;
		\item[(iii)]
		for any $X \in L(\CA)$, we have
		\[
		r(X) = |\{ i \mid \pi_i \cap \CA_X \ne \varnothing \}|.
		\]  
	\end{itemize}
\end{corollary}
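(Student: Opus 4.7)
The plan is to derive this corollary as a direct consequence of the main result of \cite{terao:factored}, which asserts that a factorization $\pi=(\pi_1,\ldots,\pi_s)$ of $\CA$ induces a tensor product decomposition of the Orlik--Solomon algebra $A(\CA)$, yielding the Poincar\'e polynomial identity
\begin{equation*}
\Poin(A(\CA),t)=\prod_{i=1}^{s}(1+|\pi_i|t).
\end{equation*}
Once this identity is in hand, each of the three assertions follows by a short combinatorial argument.

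For part (i), the top nonvanishing graded component of $A(\CA)$ lies in degree $r(\CA)$, so $\deg\Poin(A(\CA),t)=r(\CA)$. Comparing with the degree $s$ of the right-hand side forces $s=r(\CA)=r$, and the displayed formula is then exactly Terao's identity. For part (ii), the polynomial $\Poin(A(\CA),t)$ is determined by the intersection lattice $L(\CA)$, and any factorization of a polynomial in $\BBZ[t]$ into linear factors of the form $1+at$ with positive integers $a$ is unique up to reordering (the roots $-1/|\pi_i|$ are uniquely determined). Hence the multiset $\{|\pi_1|,\ldots,|\pi_r|\}$ depends only on $\CA$.

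For part (iii), the key preliminary step is that for any $X\in L(\CA)$, the induced partition $\pi_X$, taken on its nonempty blocks, is itself a factorization, this time of the localization $\CA_X$. Independence of $\pi_X$ is inherited from independence of $\pi$. The singleton-block condition transfers because any $Y\in L(\CA_X)\setminus\{V\}$ satisfies $X\subseteq Y$, so $(\CA_X)_Y=\CA_Y$ and $(\pi_X)_Y=\pi_Y$, and thus $(\pi_X)_Y$ inherits a singleton block from $\pi_Y$. This is essentially the content of \cite[Cor.~2.11]{terao:factored} (that niceness is local). Applying part (i) to the pair $(\CA_X,\pi_X)$ now yields that the number of nonempty blocks of $\pi_X$, namely $|\{i\mid \pi_i\cap\CA_X\ne\varnothing\}|$, equals $r(\CA_X)=r(X)$, which is the claim.

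The main technical input is Terao's Poincar\'e polynomial factorization theorem, which is invoked as a black box; beyond this, the derivation of (i)--(iii) is routine combinatorial bookkeeping and presents no substantial obstacle.
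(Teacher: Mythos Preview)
Your proposal is correct and aligns with the paper's treatment: the paper does not supply a proof of this corollary at all but simply records it as a consequence of the main results of \cite{terao:factored} (with a cross-reference to \cite[\S 3.3]{orlikterao:arrangements}). Your derivation---invoking Terao's tensor factorization theorem for the Poincar\'e polynomial, comparing degrees for (i), using unique factorization of the polynomial for (ii), and applying (i) to the localized factorization $\pi_X$ (via \cite[Cor.~2.11]{terao:factored}) for (iii)---is exactly the intended unpacking of that citation.
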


\begin{remark}
\label{rem:factored}
If $\CA$ is non-empty and   
$\pi$ is a factorization of $\CA$, then the non-empty parts of the 
induced partition $\pi_X$ form a factorization of $\CA_X$
for each $X \in L(\CA)\setminus\{V\}$;
cf.~the proof of \cite[Cor.\ 2.11]{terao:factored}.

For  $X$ in $L(\CA)$ modular of rank $r-1$, 
also the converse holds, e.g.~see \cite[Rem.~2.25]{roehrle:ideal}.
\end{remark}

We record the following relation between factored and aspheric arrangements, due to Paris, 
\cite{paris:factored}.

\begin{theorem}
	\label{thm:paris-factored}
	If $\CA$ is a complexified, factored arrangement in $\BBC^3$, then $\CA$ is $K(\pi,1)$.
\end{theorem}

Falk and Randell posed the question whether Theorem \ref{thm:paris-factored} does hold in arbitrary dimensions \cite[Probl.~3.12]{falkrandell:homotopyII}.
Theorem \ref{thm:factored-cformal} is strong evidence for that. 

We close this section with an example illustrating that factoredness is not hereditary, using Theorem \ref{thm:paris-factored}.

\begin{example}
	\label{ex:heredfactored}
Let $\CA$ be the \emph{ideal arrangement} in the Weyl arrangement of type $D_4$ where the associated ideal in the set of positive roots of the root system of type $D_4$  consists of all roots of height at least $4$. Owing to \cite[Thm.~1.31(ii)]{roehrle:ideal}, $\CA$ is (inductively) factored.
In \cite[Ex.~3.2]{amendmoellerroehrle:aspherical}, a restriction $\CA^H$ of $\CA$ is exhibited which is not aspherical. It then follows from Theorem \ref{thm:paris-factored} that $\CA^H$ is not factored.
\end{example}

\section{Proof of Theorem \ref{thm:factored-cformal}}
\label{sec:factored-kformal}

Let $\pi = (\pi_1, \ldots, \pi_\ell)$ be a factorization of $\CA$. We call a subset $\{H_1,\dots,H_\ell\}$ of $\CA$ a \emph{section} of $\pi$ if $H_i\in \pi_i$ for each $i=1,\dots,\ell$. Note that because $\pi$ is a factorization of $\CA$, every section of $\pi$ consists of linearly independent hyperplanes. The following lemma paves the way for our proof of Theorem \ref{thm:factored-cformal}.

\begin{lemma}
	\label{lem:lc-sections}
	Let $\pi$ be a factorization of $\CA$, $S$ be a section of $\pi$ and $H\in \CA$. Then there exists a section $T$ of $\pi$ such that $S\cup\{H\}\subset \lc(T)$.
\end{lemma}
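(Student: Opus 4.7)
The plan is to reduce $H$ to the line closure of $S$ via a controlled block-swap at a codimension-two flat, and to iterate if necessary. If $H\in S$ we just take $T=S$. Otherwise $H\in\pi_i$ with $H\neq H_i$, and I pass to the rank-$2$ flat $X:=H\cap H_i\in L_2(\CA)$. By Remark \ref{rem:factored} the induced partition $\pi_X$ is a factorization of $\CA_X$, and Corollary \ref{cor:teraofactored}(iii) forces exactly two of its blocks to be non-empty. Since $\pi_i\cap\CA_X\supseteq\{H,H_i\}$ has at least two elements, the required singleton block of $\pi_X$ must come from some $\pi_k$ with $k\neq i$; write $\pi_k\cap\CA_X=\{K\}$.

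If $H_k=K$, then $H_i,H_k\in S$; since $K\supseteq X$ and both $H_i\cap H_k$ and $X$ have codimension two, we get $H_i\cap H_k=X$. Thus $\CA_{H_i\cap H_k}=\CA_X\ni H$, so $H\in\lc(S)$ and $T=S$ works. If instead $H_k\neq K$, set $T_0:=(S\setminus\{H_k\})\cup\{K\}$; this has one hyperplane per block and so is again a section of $\pi$. The same codimension argument applied to $H_i$ and $K$ now gives $\CA_{H_i\cap K}=\CA_X\ni H$, whence $H\in\lc(T_0)$. Combined with $T_0\subseteq\lc(T_0)$, this yields $(S\setminus\{H_k\})\cup\{H\}\subseteq\lc(T_0)$.

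The remaining obstacle, and the main difficulty of the proof, is to place $H_k$ into the line closure of some section. I plan to handle this by applying the lemma recursively to the pair $(T_0,H_k)$, producing a section $T$ with $T_0\cup\{H_k\}\subseteq\lc(T)$; monotonicity of $\lc$ then yields $S\cup\{H\}\subseteq\lc(T)$. Establishing termination of this recursion is the hard step. The cleanest route is an induction on the rank $\ell=r(\CA)$: the base case $\ell=2$ is immediate, since the line closure of any two distinct hyperplanes in a central rank-$2$ arrangement is the whole arrangement. For the inductive step I would pass to a rank-$(\ell-1)$ localization $\CA_Y$ containing $H_k$ together with the section elements of $T_0$ from blocks other than $\pi_i$; by Remark \ref{rem:factored} and Corollary \ref{cor:teraofactored}(iii) the induced partition on $\CA_Y$ is nice of rank $\ell-1$. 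The inductive hypothesis applied inside $\CA_Y$ then furnishes a section whose $\CA_Y$-line-closure contains the needed hyperplanes, and the inclusion $\lc_{\CA_Y}\subseteq\lc_{\CA}$ lets one extend this back to a section of $\pi$ in $\CA$ by filling in the missing block. Verifying that this lifted section genuinely witnesses the required line-closure property in $\CA$ is the delicate bookkeeping at the heart of the argument.
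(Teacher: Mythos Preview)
Your single swap is correct and essentially matches the paper's: for $H\in\pi_i\setminus\{H_i\}$ the singleton of $\pi_{H\cap H_i}$ gives $K\in\pi_k$ with $k\neq i$, and after replacing $H_k$ by $K$ one has a new section $T_0$ with $(S\setminus\{H_k\})\cup\{H\}\subseteq\lc(T_0)$. Reducing to the residual pair $(T_0,H_k)$ is also correct: if $T_0\cup\{H_k\}\subseteq\lc(T)$ then $H_i,K\in\lc(T)$ and hence $H\in\CA_{H_i\cap K}\subseteq\lc(T)$.

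The gap lies in your termination via induction on the rank. You need a flat $Y$ of rank $\ell-1$ with $\CA_Y\supseteq(T_0\setminus\{H_i\})\cup\{H_k\}=\{H_j:j\neq i,k\}\cup\{K,H_k\}$. Any such $Y$ lies in $Y_0:=\bigcap_{j\neq i,k}H_j\cap K\cap H_k$, so $r(Y)\ge r(Y_0)$, and Corollary~\ref{cor:teraofactored}(iii) gives $r(Y_0)=\ell-1$ only when $\pi_i\cap\CA_{Y_0}=\varnothing$. This is not guaranteed: if, say, the singleton block of $\pi_{K\cap H_k}$ happens to lie in $\pi_i$ --- and the niceness axioms do not exclude this --- then already $\pi_i\cap\CA_{K\cap H_k}\neq\varnothing$, forcing $r(Y_0)=\ell$, so no rank-$(\ell-1)$ localization with the required content exists and your inductive step does not reduce the rank. (For the standard chain partition of a supersolvable arrangement the singleton always drops to a strictly lower block, so the obstruction is invisible there; general nice partitions carry no such ordering.) The ``delicate bookkeeping'' you allude to at the end is in fact harmless once $Y$ exists --- extending the inductively obtained section by $H_i$ works --- but the existence of $Y$ is exactly what is missing.

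The paper avoids localization altogether. It iterates the swap indefinitely, producing sequences of sections $\SS_n$ and residual hyperplanes $\HH_n$; finiteness of $\CA$ forces $\HH_p=\HH_q$ for some $p<q$, and a reverse induction from $n=q$ down to $n=0$ shows that every hyperplane discarded along the way lies in $\lc(\SS_q)$. Combined with the observation that each element of $\{H\}\cup S$ either survives into $\SS_q$ or is discarded at some step, this yields $S\cup\{H\}\subseteq\lc(\SS_q)$, so $T=\SS_q$ works.
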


\begin{proof}
	Since $\pi$ is a factorization of $\CA$, 
	if we pick two hyperplanes $H,H'$ in a block of $\pi$,
	then the induced partition $\pi_{H\cap H'}$ contains a singleton block. 
	Define $s(H,H')$ to be the hyperplane in this singleton block. 
	Note that $s(H, H') = H$ if and only if $H=H'$.
	
	Let $\pi = (\pi_1, \ldots, \pi_\ell)$, and let $\iota:\CA\rightarrow\{1,\dots,\ell\}$ be the indicator function of each block, i.e.~$H\in \pi_{\iota(H)}$ for each $H\in \CA$.
	Pick an arbitrary section $S$ of $\pi$
	and an arbitrary hyperplane $H\in \CA$
	and define the sequences
	\begin{align*}
	\HH & :\BBN_0\rightarrow \CA, i \mapsto \HH_i, \\ 	
	\bar{ } & :\BBN_0\rightarrow \{1,\dots,\ell\}, \bar i := \iota(\HH_i) , \text{ and}\\ \SS  & :\BBN_0\rightarrow \{\text{sections of }\pi\}, i \mapsto \SS_i
	\end{align*}
	inductively as follows:
	As starting parameters choose $\HH_0 := H$, and $\SS_0 := S$.
	In the $i$-th step writing $\SS_i=\{K_{i,1},\dots,K_{i,\ell}\}$ such that $K_{i,j}\in \pi_j$ for $j=1,\dots,\ell$, define 
	$$\HH_{i+1} := s(\HH_i, K_{i,\bar i}) \ \text{ and }\ 
	K_{i+1,j} :=
	\begin{cases}
	\HH_i &\text{if}~j=\bar i\\
	K_{i,j} &\text{else}
	\end{cases}
	\quad\text{for }j=1,\dots,\ell.$$
	This means $\HH_i$ is swapped against $K_{i,\bar i}$ 
	in the new section $\SS_{i+1}$ and 
	$\HH_{i+1}$ becomes the singleton hyperplane 
	associated to $\HH_i$ and $K_{i,\bar i}$. Note, both $\HH_i, K_{i,\bar i}$ belong to $\pi_{\bar i}$ and so, by construction, $\SS_{i+1}$ is again a section of $\pi$ for every $i\in \BBN_0$.
	
	The following property is easy to see. 
	For all integers $p,q$ with $0\leq p<q$ 
	and for every $K\in \{\HH_p,K_{p,1},\dots,K_{p,\ell}\}$, 
	either $K\in \SS_q$ 
	or there is a $j\in \{p,\dots,q-1\}$ 
	such that $K=K_{j,\bar j}$. 
	Since $\CA$ is a finite set, we can choose 
	the smallest two integers $p<q$ 
	such that $\HH_p = \HH_q$. 
	We argue by (reverse) induction that 
	$K_{j,\bar j} \in \lc(\SS_q)$ for every $j=0,\dots,q$.
	
	For $j=q$ there is nothing to prove. 
	Assume that $0\leq j<q$ and the hypothesis is true for $k$ with $j<k\leq q$.
	If $\HH_j=K_{j,\bar j}$ or $\HH_{j+1}=K_{j,\bar j}$, 
	then the sequence becomes stationary 
	and we have $K_{j,\bar j}=K_{q,\bar q} \in \SS_q$.

	Otherwise both $\HH_j$ and $\HH_{j+1}$ are either in $\SS_q$ 
	or of the form $K_{k,\bar k}$ for some $j<k\leq q$. 
	Due to the induction hypothesis, we have $\HH_j, \HH_{j+1} \in \lc(\SS_q)$. 
	Since $\HH_{j+1} = s(\HH_j,K_{j,\bar j})$, 
	we have $K_{j,\bar j} \in \CA_{\HH_j\cap \HH_{j+1}}\subset \lc(\SS_q)$,
	which completes the proof.
\end{proof}

\begin{corollary}
	\label{cor:factored-lc}
	If $\pi$ is a factorization of $\CA$,
	then there exists a section $S$ of $\pi$
	such that $\lc(S)=\CA$.
\end{corollary}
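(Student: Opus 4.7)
The plan is a straightforward induction built on top of Lemma~\ref{lem:lc-sections}. I would fix an arbitrary starting section $S_0$ of $\pi$ and an enumeration $\CA = \{H_1,\dots,H_n\}$ of the hyperplanes. Then I would construct sections $S_0, S_1, \dots, S_n$ recursively: given $S_{i-1}$, apply Lemma~\ref{lem:lc-sections} with input section $S_{i-1}$ and hyperplane $H_i$ to obtain a section $S_i$ satisfying $S_{i-1}\cup\{H_i\}\subseteq \lc(S_i)$.

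The induction hypothesis to propagate is $\{H_1,\dots,H_{i-1}\}\subseteq \lc(S_{i-1})$. To carry it forward, I would invoke the standard closure-operator properties of $\lc$: since the intersection of line-closed subsets of $\CA$ is again line-closed, the set $\lc(\CB)$ is itself line-closed, so $\lc$ is both idempotent and monotone. Therefore $S_{i-1}\subseteq \lc(S_i)$ forces $\lc(S_{i-1})\subseteq \lc(\lc(S_i)) = \lc(S_i)$, and combining this with $H_i\in \lc(S_i)$ yields $\{H_1,\dots,H_i\}\subseteq \lc(S_i)$, as required.

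After $n$ steps one obtains $\CA = \{H_1,\dots,H_n\}\subseteq \lc(S_n)$, while the reverse inclusion $\lc(S_n)\subseteq \CA$ is automatic from the definition of $\lc$ as an intersection of subsets of $\CA$. Hence $S := S_n$ is the desired section with $\lc(S) = \CA$. There is no real obstacle beyond Lemma~\ref{lem:lc-sections} itself, which carries all the combinatorial weight; the corollary is just the iteration of that lemma along a fixed enumeration of $\CA$, bootstrapped by the closure properties of $\lc$.
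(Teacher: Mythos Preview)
Your proof is correct and takes essentially the same approach as the paper: both deduce the corollary from Lemma~\ref{lem:lc-sections} together with the monotonicity and idempotence of $\lc$. The only cosmetic difference is that the paper picks a section $S$ with $|\lc(S)|$ maximal and argues by contradiction, whereas you iterate the lemma along an enumeration of $\CA$; the underlying mechanism is identical.
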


\begin{proof}
	Let $S$ be a section of $\pi$ such that
	$\lc(S)$ is of maximal cardinality
	amongst all sections of $\pi$.
	For $H\in \CA$, with Lemma \ref{lem:lc-sections},
	there is a section $T$ such that $\{H\}\cup S\subset \lc(T)$.
	Thus, $\{H\}\cup \lc(S)\subset \lc(T)$,
	and because $S$ was chosen maximal,
	we get $\lc(S)=\lc(T)$ and finally $H\in \lc(S)$.
\end{proof}

Armed with Proposition \ref{prop:lcbasis} and Corollary \ref{cor:factored-lc}, we can now address Theorem \ref{thm:factored-cformal}.

\begin{proof}[Proof of Theorem \ref{thm:factored-cformal}]
Let $\pi$ be a factorization of $\CA$.
With Corollary \ref{cor:factored-lc} there is a section
$S$ of $\pi$ such that $\lc(S)=\CA$.
Since $\pi$ is independent,
we have $r(\cap S)=\ell$.
Thus, by Proposition \ref{prop:lcbasis},
$\CA$ is combinatorially formal.
\end{proof}

We show the versatility of Theorem \ref{thm:factored-cformal}  by improving a result from 
\cite{hogeroehrle:factored}.
In \cite[Thm.~3.5]{hogeroehrle:factored}, the authors prove an addition-deletion theorem for factored arrangements. For, let $\pi$ be a partition of $\CA$ and let
$H_0\in \pi_1$ be fixed. Consider the induced partitions $\pi'$ of $\CA'$ by removing $H_0$ and $\pi''$ of $\CA''$ by intersecting the parts $\pi_2,\dots,\pi_\ell$ with $H_0$. To ensure that $\pi''$ is a factorization, in addition, Hoge and R\"ohrle require the bijectivity of the restriction map $\rho:\CA\setminus\pi_1 \rightarrow \CA''$ defined by $\rho(H)=H\cap H_0$.

\begin{theorem}
	\label{thm:addition-deletion-factored}
	Let $\CA$ be a nonempty arrangement of rank $\ell$ and let $\pi=(\pi_1,\dots,\pi_\ell)$ be a partition of $\CA$. For $H_0\in \pi_1$, consider the arrangement triple $(\CA,\CA',\CA'')$. Two of the following statements imply the third:
	\begin{itemize}
		\item[(i)] $\pi$ is a factorization of $\CA$;
		\item[(ii)] $\pi'$ is a factorization of $\CA'$;
		\item[(iii)] $\rho: \CA\setminus \pi_1\rightarrow \CA''$ is bijective and $\pi''$ is a factorization of $\CA''$.
	\end{itemize}
\end{theorem}

Using Theorem $\ref{thm:factored-cformal}$, we can prove a stronger criterion for the deletion part of the theorem. It turns out that the bijectivity of $\rho:\CA\setminus \pi_1\rightarrow \CA''$ is a strong enough requirement on its own.

\begin{proposition}
	Let $\pi=(\pi_1,\dots,\pi_\ell)$ be a factorization of $\CA$. Let $H_0 \in \pi_1$. Suppose that the restriction map
	$$\rho: \CA\setminus \pi_1 \rightarrow \CA'',\; H\mapsto H\cap H_0$$
	is bijective. Then $\pi'$ is a factorization for $\CA'$ and $\pi''$ is a factorization for $\CA''$.
\end{proposition}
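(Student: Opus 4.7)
The plan is to show that $\pi''$ is a nice partition of $\CA''$; then $\pi'$ being nice for $\CA'$ follows from Theorem~\ref{thm:addition-deletion-nice} since $\rho$ is bijective by hypothesis. I would argue by induction on the rank $\ell$ of $\CA$, with $\ell \le 2$ as straightforward base cases.

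The formal requirements for $\pi''$ are routine. Bijectivity of $\rho$ gives that $\pi'' = (\rho(\pi_2),\dots,\rho(\pi_\ell))$ is a partition of $\CA''$ with $\ell-1$ blocks. A section of $\pi''$ lifts through $\rho$ to a section of $\pi$ by adjoining $H_0$; since the latter is independent in $V$ (codimension $\ell$) and $H_0$ contributes exactly one codimension, the former is independent in $H_0$ (codimension $\ell-1$).

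The substance is the singleton condition. Take $X \in L(\CA'') \setminus \{H_0\}$, so that $X \in L(\CA)$ with $X \subsetneq H_0$. By Remark~\ref{rem:factored}, $\pi_X$ is nice for $\CA_X$ and admits a singleton block $\pi_{i_0} \cap \CA_X = \{H^*\}$. If $i_0 \ge 2$, then $H^* \neq H_0$ and $\rho(H^*)$ is a singleton block of $\pi''_X$. If $i_0 = 1$, then since $H_0 \in \pi_1 \cap \CA_X$ (as $X \subseteq H_0$), we must have $H^* = H_0$ and $\pi_1 \cap \CA_X = \{H_0\}$; the restriction map $\rho_X \colon \CA_X \setminus \{H_0\} \to (\CA_X)''$ inherits bijectivity from $\rho$, and for $r(X) < \ell$ the induction hypothesis applied to $\CA_X$ yields $(\pi_X)'' = \pi''_X$ nice for $(\CA_X)'' = \CA''_X$, in particular with a singleton.

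The hard part is the edge case $X = T(\CA)$, where $\CA_X = \CA$ has rank $\ell$ and the induction does not apply directly. This reduces to showing that $\pi_1 = \{H_0\}$ combined with $|\pi_i| \ge 2$ for every $i \ge 2$ is incompatible with the hypotheses. For $K_1, K_2$ in such a block $\pi_i$, bijectivity of $\rho$ rules out $H_0 \supseteq K_1 \cap K_2$, and niceness of $\pi$ at the rank-$2$ flat $K_1 \cap K_2$ produces a unique $L \in \pi_j$ (some $j \neq i$, $j \ge 2$) with $L \supseteq K_1 \cap K_2$. The symmetric analysis applied to a pair $L, L' \in \pi_j$ yields an associate $K \in \pi_k$ containing $L \cap L'$; if $K \in \{K_1, K_2\}$, a codimension count forces $K_1 \cap K_2 = L \cap L'$, so this flat lies in four hyperplanes split between two blocks, contradicting niceness there. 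Theorem~\ref{thm:nice-cformal} enters via Corollary~\ref{cor:factored-lc}: the lc-basis of $\CA$ lets one propagate these local rank-$2$ constraints globally and arrive at the contradiction, forcing some $|\pi_i| = 1$ for $i \ge 2$ and supplying the required singleton in $\pi''$ at the top flat.
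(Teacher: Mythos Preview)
Your overall plan—establish that $\pi''$ is nice and then invoke Theorem~\ref{thm:addition-deletion-nice}—matches the paper, as does your treatment of independence and of the case $i_0 \ge 2$. The gap is precisely in your ``edge case''. Your sketch via the associate map $K_1,K_2 \mapsto L$ and the lc-basis of Corollary~\ref{cor:factored-lc} never explains how the rank-$2$ constraints ``propagate globally'' to a contradiction; ruling out $K \in \{K_1,K_2\}$ is only a first step, and the sentence invoking Theorem~\ref{thm:nice-cformal} through Corollary~\ref{cor:factored-lc} is a placeholder, not an argument.

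The paper does not use induction at all. For \emph{any} $Y \in L(\CA'')$ at which $\pi_1 \cap \CA_Y = \{H_0\}$ is the only singleton block, bijectivity of $\rho$ gives $\CA_{H \cap H_0} = \{H, H_0\}$ for every $H \in \CA_Y \setminus \{H_0\}$, so no rank-$2$ relation in $\CA_Y$ involves $H_0$. Since $\CA_Y$ is nice (Remark~\ref{rem:factored}) and hence formal by Theorem~\ref{thm:nice-cformal}, every element of $F(\CA_Y) = F_2(\CA_Y)$ has vanishing $H_0$-coordinate; thus $\alpha_{H_0}$ is independent of the remaining forms and $H_0$ is a separator in $\CA_Y$, i.e.\ $\CA_Y \setminus \{H_0\} = \CA_X$ for some $X \in L(\CA)$. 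Niceness of $\CA_X$ then demands a singleton block in $\pi_X$, but $\pi_1 \cap \CA_X = \varnothing$ and $|\pi_i \cap \CA_X| = |\pi_i \cap \CA_Y| \ge 2$ for $i \ge 2$—a contradiction. This argument handles all $Y$ uniformly, so your induction is unnecessary; the missing idea is exactly that formality forces $H_0$ to be a separator, and this is how Theorem~\ref{thm:nice-cformal} actually enters.
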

\begin{proof}
	Since $\rho$ is bijective, $\pi''$ is an independent partition of $\CA''$.
	Let $Y\in L(\CA'')$. 
	Note that $Y$ can also be considered as an element of $L(\CA)$. 
	Since $\pi$ is a factorization, there is a singleton part in $\pi_Y$.
	If $\pi_i\cap \CA_Y$ is a singleton for some $i>1$,
	then the bijectivity of $\rho$ implies that $\pi''_i \cap \CA''_Y$ is a singleton as well.
	Therefore, $\pi''$ is not a factorization of $\CA''$ if and only if for some $Y\in L(\CA'')$,
	the only singleton part  in $\pi_Y$ is $\{H_0\}=\pi_1 \cap \CA_Y$.
	Suppose that this is the case.
	Then, if  $H \in \CA_Y\setminus \{H_0\}$, the bijectivity of $\rho$ implies that $\CA_{H\cap H_0} = \{H, H_0\}$,
	 so there are 
	no linear dependencies of rank $2$ in $\CA_Y$ involving $H_0$. 
	With Remark \ref{rem:factored} $\CA_Y$ is factored, 
	so by Theorem \ref{thm:factored-cformal} it is formal. 
	The formality of $\CA_Y$ implies that $H_0$ is a separator inside $\CA_Y$, 
	i.e.~$r(\CA_Y\setminus \{H_0\}) =  r(\CA_Y)-1$. 
	Because of this there exists $X\in L(\CA)$ with $\CA_X = \CA_{Y} \setminus \{H_0\}$. 
	Again, by Remark \ref{rem:factored}, $\CA_X$ is factored, so $\pi_X$ has a singleton part. Our assumption implies $\pi_1\cap \CA_X=\emptyset$ and $|\pi_i \cap \CA_X| = |\pi_i \cap \CA_Y| > 1$ for $2\leq i\leq \ell$,
	a contradiction. 
	It follows that
	$\pi''$ is a factorization for $\CA''$. 
	By the deletion part of Theorem \ref{thm:addition-deletion-factored}, 
	$\pi'$ is also a factorization for $\CA'$.
\end{proof}

\begin{remarks}
	\label{rem:factored-cformal}
	(a). The converse of Theorem \ref{thm:factored-cformal} is false.
	In \cite{moeller:taut}, M\"oller constructs a real $3$-arrangement which is combinatorially formal whose characteristic polynomial does not factor over the integers, thus in particular, it is not factored, cf.~Corollary \ref{cor:teraofactored}(i).
	In addition, M\"oller's example also fails to be  $K(\pi,1)$, as it satisfies the ``simple triangle'' condition of Falk and Randell, \cite[Cor.~3.3]{falkrandell:homotopy}. 
	Note that this example is actually $k$-formal for all $k$. 
	
	(b).
	In view of Theorem \ref{thm:kpi1formal}, 
	Theorem \ref{thm:factored-cformal} is strong support for an affirmative answer to a question raised by Falk and Randell \cite[Probl.~3.12]{falkrandell:homotopyII}, whether 
	complexified factored arrangements are always $K(\pi,1)$, extending Theorem \ref{thm:paris-factored}.
\end{remarks}

\section{Proof of Theorem \ref{thm:restriction-formal}}
\label{sec:restriction-formal}


Let $V$ be an $\ell$-dimensional vector space over some field $\BBK$.
For a subspace $X \subseteq V$ we denote by $\Ann_{V^*}(X) = \{ f \in V^* \mid f|_X \equiv 0\}$
the subspace of $V^*$ of all forms which vanish on $X$.

	Recall from \S \ref{ssect:formality} that for an arrangement $\CA$ in $V$,
	$F(\CA)$ is defined as the kernel of the map
	$\pi_1: \KK(\CA):= \bigoplus_{H \in \CA} \Ann_{V^*}(H) \to V^*, 
	(\alpha_H \mid H \in \CA) \mapsto \sum_{H \in \CA} \alpha_H$.
For a subarrangement $\CB \subseteq \CA$ we clearly have
a natural inclusion $F(\CB) \hookrightarrow F(\CA)$ 
(induced by the natural inclusion $\KK(\CB) \hookrightarrow \KK(\CA)$).
If $\CB = \CA_X$ we denote this inclusion by $i_X$.

Recall, that an arrangement $\CA$ is formal if and only if the map
\begin{align*}
\pi_2 :=\sum_{X \in L_2(\CA)}i_X:\bigoplus_{X \in L_2(\CA)} F(\CA_X) &\to F(\CA), \\
(a_X \mid X \in L_2(\CA)) &\mapsto \sum_{X \in L_2(\CA)} i_X(a_X)
\end{align*}
is surjective.

In the following, we fix an arrangement $\CA$ and an intersection $Z \in L(\CA)$.
The subsequent propositions yield Theorem \ref{thm:restriction-formal} step by step.

\begin{proposition}
	\label{prop:psi1}
	There exists a surjective map $\psi_1: \KK(\CA) \to \KK(\CA^Z)$ such that the diagram
	\begin{equation}
	\label{cd:1}
		\begin{tikzcd}
			\KK(\CA) \ar[r, "\pi_1"] \ar[d, "\psi_1"] & V^* \ar[d, "\res|_{Z^*}"] \\
			\KK(\CA^Z) \ar[r, "\pi_1^Z"] & Z^*
		\end{tikzcd}
	\end{equation}
	commutes, where $\pi_1^Z$ denotes the corresponding map for $\CA^Z$.
	
	Furthermore, for $Y \geq Z$ the induced diagram
	\begin{equation}
	\label{cd:2}
		\begin{tikzcd}
			\KK(\CA_Y) \ar[r, hook] \ar[d, "\psi_{1,Y}"] & \KK(\CA) \ar[d, "\psi_1"] \\
			\KK(\CA_Y^Z) \ar[r, hook] & \KK(\CA^Z)
		\end{tikzcd}
	\end{equation}
	also commutes.
\end{proposition}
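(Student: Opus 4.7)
The map $\psi_1$ should be built componentwise out of the restriction map $V^* \to Z^*$. For each $H \in \CA$ there are two cases: if $H \in \CA_Z$, then every $\alpha \in \Ann_{V^*}(H)$ vanishes on $Z$, so the summand $\Ann_{V^*}(H)$ has to be sent to $0$; if $H \in \CA \setminus \CA_Z$, then writing $K := H \cap Z \in \CA^Z$, restriction to $Z$ sends $\Ann_{V^*}(H)$ into $\Ann_{Z^*}(K)$, and I send it to the corresponding summand of $\KK(\CA^Z)$. Concretely, I define
\[
\psi_1\bigl((\alpha_H)_{H \in \CA}\bigr) := (\beta_K)_{K \in \CA^Z}, \qquad \beta_K := \sum_{\substack{H \in \CA \setminus \CA_Z \\ H \cap Z = K}} \alpha_H|_Z,
\]
summing over all preimages of $K$ under the restriction assignment $H \mapsto H \cap Z$.

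The commutativity of the first square is then immediate: both compositions applied to $(\alpha_H)$ produce $\sum_{H \in \CA \setminus \CA_Z} \alpha_H|_Z \in Z^*$, since the terms indexed by $H \in \CA_Z$ drop out after restriction. For surjectivity, I observe that for each $K \in \CA^Z$ and each $H \in \CA \setminus \CA_Z$ with $H \cap Z = K$, the restriction map $\Ann_{V^*}(H) \to \Ann_{Z^*}(K)$ is a linear map between one-dimensional spaces and is nonzero (since the defining form of $H$ does not vanish identically on $Z$), hence an isomorphism. Therefore, given any target element $(\beta_K) \in \KK(\CA^Z)$, I pick for each $K$ a single representative $H_K \in \CA \setminus \CA_Z$ with $H_K \cap Z = K$, lift $\beta_K$ to a unique $\alpha_{H_K} \in \Ann_{V^*}(H_K)$, and set all other components to zero; this preimage maps to $(\beta_K)$ under $\psi_1$.

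For the second square, I use that $Y \geq Z$ means $Y \subseteq Z$, so $\CA_Z \subseteq \CA_Y$ and $\CA_Y^Z = \{H \cap Z \mid H \in \CA_Y \setminus \CA_Z\}$ is naturally a subset of $\CA^Z$. Starting from $(\alpha_H)_{H \in \CA_Y} \in \KK(\CA_Y)$ and going right-then-down, the inclusion into $\KK(\CA)$ pads with zeros, and applying $\psi_1$ yields $\beta_K = \sum_{H \in \CA_Y \setminus \CA_Z,\ H \cap Z = K} \alpha_H|_Z$, which is automatically zero unless $K \in \CA_Y^Z$. Going down-then-right produces the same expression for $K \in \CA_Y^Z$ and zero otherwise. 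The squares agree componentwise.

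The entire argument is essentially bookkeeping; there is no real obstacle, only the need to be careful that $\psi_1$ is well-defined on the direct sum (multiple $H$'s may restrict to the same $K$, so the summation in the definition of $\beta_K$ is essential) and that both $\psi_1$ and $\psi_{1,Y}$ use the same convention for this summation, which makes the compatibility square \eqref{cd:2} automatic.
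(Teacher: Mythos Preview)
Your construction of $\psi_1$ is exactly the one in the paper: the paper writes it as $(\bigoplus_K \psi_K)\circ\mathrm{proj}$ with $\psi_K((\alpha_H)) = \sum_{H \in \CA_K \setminus \CA_Z} \alpha_H|_Z$, which coincides with your formula since $\CA_K \setminus \CA_Z = \{H \in \CA \setminus \CA_Z : H \cap Z = K\}$. Your treatment of surjectivity and of diagram~\eqref{cd:2} is more explicit than the paper's (which simply calls these points ``obvious''), but the approach is the same.
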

\begin{proof}
	For $K \in \CA^Z$ set 
	\begin{align*}
	\psi_K: \bigoplus_{H \in \CA_K \setminus \CA_Z} \Ann_{V^*}(H) &\to \Ann_{Z^*}(K),\\
	(\alpha_H \mid H \in \CA_K \setminus \CA_Z) &\mapsto \sum_{H \in \CA_K \setminus \CA_Z} \alpha_H|_{Z^*},
	\end{align*}
	which is obviously surjective.
	Note that $\CA\setminus  \CA_Z = \bigsqcup_{K \in \CA^Z} (\CA_K\setminus  \CA_Z)$.
	Then	
	\begin{center}
		\begin{tikzcd}
			& \KK(\CA) \ar[dr, "\text{proj}"] \ar[rr, "\psi_1 := (\bigoplus\psi_K) \circ\text{proj}"] & & \KK(\CA^Z)	\\
			& & \KK(\CA\setminus \CA_Z) \ar[ur, "\bigoplus\psi_K"] & 
		\end{tikzcd}
	\end{center}
	does the job (since for $H \in \CA_Z$ and $\alpha_H \in\Ann_{V^*}(H)$ we have $\alpha_H|_{Z^*} = 0$).
	
	The commutativity of the diagram \eqref{cd:2} is obvious. 
\end{proof}

\begin{proposition}
	\label{prop:tpsi1}
	The canonical map $\wt{\psi}_1:F(\CA) \to F(\CA^Z)$ between
	the kernels induced by the diagram  \eqref{cd:1} 
	is surjective.	
\end{proposition}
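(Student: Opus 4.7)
The plan is a direct diagram chase: given $b \in F(\CA^Z) \subseteq \KK(\CA^Z)$, I want to produce a preimage $a \in F(\CA)$ under $\wt{\psi}_1$. First I would use the surjectivity of $\psi_1$ (established in Proposition \ref{prop:psi1}) to pick some $a' \in \KK(\CA)$ with $\psi_1(a') = b$. This $a'$ need not be a kernel element, but the commutativity of diagram \eqref{cd:1} together with $\pi_1^Z(b) = 0$ gives
\[
\res|_{Z^*}\bigl(\pi_1(a')\bigr) \;=\; \pi_1^Z(\psi_1(a')) \;=\; 0,
\]
so the element $\alpha := \pi_1(a') \in V^*$ lies in $\Ann_{V^*}(Z)$. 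The task is then to correct $a'$ by an element of $\ker\psi_1$ whose $\pi_1$-image equals $\alpha$.

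The key observation for the correction is that because $Z \in L(\CA)$, we have $Z = \bigcap_{H \in \CA_Z} H$, which forces
\[
\Ann_{V^*}(Z) \;=\; \sum_{H \in \CA_Z} \Ann_{V^*}(H).
\]
Hence I can write $\alpha = \sum_{H \in \CA_Z} \alpha_H$ with $\alpha_H \in \Ann_{V^*}(H)$. Define $c \in \KK(\CA)$ by taking its $H$-component to equal $\alpha_H$ for $H \in \CA_Z$ and $0$ for $H \in \CA \setminus \CA_Z$. By construction $\pi_1(c) = \alpha$, and because $\psi_1$ is the composition $(\bigoplus_K \psi_K) \circ \text{proj}$, where $\text{proj}$ kills precisely the components indexed by $\CA_Z$, we have $\psi_1(c) = 0$.

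Setting $a := a' - c$ then yields $\pi_1(a) = \alpha - \alpha = 0$, so $a \in F(\CA)$, and $\psi_1(a) = b - 0 = b$, which shows that $\wt{\psi}_1(a) = b$. I do not expect any genuine obstacle here; the one point to verify carefully is that the identity $\Ann_{V^*}(Z) = \sum_{H \in \CA_Z} \Ann_{V^*}(H)$ really follows from $Z \in L(\CA)$ (it is essentially the standard duality $\Ann_{V^*}(\cap_i H_i) = \sum_i \Ann_{V^*}(H_i)$), because everything else is a straightforward diagram chase built on the commutativity of \eqref{cd:1} and the surjectivity of $\psi_1$.
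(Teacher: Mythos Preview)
Your proof is correct and is essentially the same as the paper's: the paper invokes the snake lemma on the two exact rows, noting that $\KK(\CA_Z)\subseteq\ker(\psi_1)$ surjects under $\pi_1$ onto $\Ann_{V^*}(Z)=\ker(\res|_{Z^*})$, which together with the surjectivity of $\psi_1$ forces $\coker(\wt{\psi}_1)=0$. Your argument is precisely the diagram chase underlying that snake-lemma step, and your key identity $\Ann_{V^*}(Z)=\sum_{H\in\CA_Z}\Ann_{V^*}(H)$ is exactly the paper's observation that $\KK(\CA_Z)$ maps onto $\Ann_{V^*}(Z)$.
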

\begin{proof}
	By Proposition \ref{prop:psi1} we have the following diagram with exact rows:
	\begin{center}
		\begin{tikzcd}
			0 \ar[r] &F(\CA) \ar[d,"\wt{\psi}_1"] \ar[r] & \KK(\CA) \ar[d, "\psi_1"] \ar[r, "\pi_1"] &V^* \ar[d,"\res|_{Z^*}"] \ar[r] &0 \\
			
			0 \ar[r] &F(\CA^Z)  \ar[r] & \KK(\CA^Z) \ar[r, "\pi_1^Z"]  &Z^* \ar[r] &0
		\end{tikzcd}
	\end{center}
	If we look at the map induced by $\pi_1$ between the kernels of $\psi_1$ and $\res|_{Z^*}$,
	we see that under this map $\KK(\CA_Z) \subseteq \ker(\psi_1)$ maps onto $\Ann_{V^*}(Z) = \ker(\res|_{Z^*})$.
	Since $\psi_1$ is surjective, by Proposition \ref{prop:psi1}, the snake lemma yields: $\coker(\wt{\psi}_1) = 0$.	
\end{proof}

\begin{proposition}\label{prop:ImpiZ}
	For the map $\wt{\psi}_1$ we have:
	\[
	\wt{\psi}_1 (\im(\pi_2)) \subseteq \im(\pi_2^Z).
	\]
	In particular, the induced map $\coker(\pi_2) \to \coker(\pi_2^Z)$ is surjective.
\end{proposition}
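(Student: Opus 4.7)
The plan is to verify the inclusion $\widetilde{\psi}_1(i_X(a)) \in \im(\pi_2^Z)$ for each generator $i_X(a)$ of $\im(\pi_2)$, where $X \in L_2(\CA)$ and $a=(a_H)_{H \in \CA_X} \in F(\CA_X)$; linearity then gives the full inclusion. Unwinding the definition of $\psi_1$ from Proposition \ref{prop:psi1}, the $K$-component of $\widetilde{\psi}_1(i_X(a))$, for $K \in \CA^Z$, equals the sum of $a_H|_Z$ over those $H \in \CA_X$ with $H \not\supseteq Z$ and $H \cap Z = K$. Since every such $H$ contains $X$, we have $K = H \cap Z \supseteq X \cap Z =: W$, so the support of $\widetilde{\psi}_1(i_X(a))$ lies in $(\CA^Z)_W$, and the element itself in $F((\CA^Z)_W) \subseteq F(\CA^Z)$.

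The key geometric input is the codimension bound $\rk_Z W \leq 2$. Indeed, $W \subseteq Z$ gives $\codim_V W = \codim_V Z + \codim_Z W$, while $W = X \cap Z$ gives $\codim_V W \leq \codim_V X + \codim_V Z = 2 + \codim_V Z$; combining these yields $\codim_Z W \leq 2$. Provided $\CA_X \not\subseteq \CA_Z$, $W$ is a (nonempty) intersection of members of $\CA^Z$, hence $W \in L(\CA^Z)$. I then split into three cases by $\rk_Z W \in \{0,1,2\}$. If $\rk_Z W = 0$, then $W = Z$, so $Z \subseteq X$ and $\CA_X \subseteq \CA_Z$, and the image vanishes. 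If $\rk_Z W = 1$, then $W$ itself is a hyperplane of $\CA^Z$ and $(\CA^Z)_W = \{W\}$, so $F((\CA^Z)_W) = \ker(\Ann_{Z^*}(W) \hookrightarrow Z^*) = 0$ and the image again vanishes. If $\rk_Z W = 2$, then $W \in L_2(\CA^Z)$ and $\widetilde{\psi}_1(i_X(a)) \in F(\CA^Z_W) \subseteq \im(\pi_2^Z)$ directly from the definition of $\pi_2^Z$.

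Combining the three cases gives $\widetilde{\psi}_1(\im(\pi_2)) \subseteq \im(\pi_2^Z)$. Consequently $\widetilde{\psi}_1$ descends to a well-defined map $\coker(\pi_2) \to \coker(\pi_2^Z)$, and the surjectivity of this induced map follows at once from the surjectivity of $\widetilde{\psi}_1$ established in Proposition \ref{prop:tpsi1}. The only step that requires genuine thought is the identification of $W = X \cap Z$ as an element of $L(\CA^Z)$ of rank at most $2$, together with the observation that the boundary case $\rk_Z W = 1$ is taken care of by the vanishing of $F$ on a single-hyperplane arrangement; once these points are in place, the remainder is essentially bookkeeping.
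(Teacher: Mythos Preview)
Your proof is correct and follows essentially the same strategy as the paper: for each $X \in L_2(\CA)$ one shows that $\wt{\psi}_1(i_X(F(\CA_X)))$ lands in $i_Y^Z(F((\CA^Z)_Y))$ for a suitable $Y \in L(\CA^Z)$ of rank at most $2$. The paper simply asserts the existence of some $Y \in L_2(\CA^Z)$ with $Y \geq X$ and then invokes the commutativity of diagram~\eqref{cd:2}, whereas you work directly with the canonical candidate $W = X \cap Z$, establish $\rk_Z W \leq 2$ by an explicit dimension count, and dispose of the degenerate cases $\rk_Z W \in \{0,1\}$ by noting that the relevant $F$-space vanishes. Your version is a bit more hands-on and self-contained (and in fact handles the edge cases the paper leaves implicit); the paper's version is terser and leans on the functoriality already recorded in Proposition~\ref{prop:psi1}. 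Substantively they are the same argument.
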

\begin{proof}
	Since, by definition, we have $\im(\pi_2) =  \sum_{X \in L_2(\CA)} i_X(F(\CA_X))$, it suffices to
	show that $\wt{\psi}_1 (i_X(F(\CA_X))) \subseteq \im(\pi_2^Z)$ for each $X \in L_2(\CA)$.
	
	For $Y \geq X$, we have $i_{X}(F(\CA_X)) \subseteq i_Y(F(\CA_Y))$.	
	If $X \in L_2(\CA)$ then there is a $Y \in L_2(\CA^Z)$ with $Y \geq X$.
	Since the diagram \eqref{cd:2}
	commutes, the same holds for the induced maps on the kernels,
	i.e.~$\wt{\psi}_1 \circ i_Y = i_Y^Z \circ \wt{\psi}_{1,Y}$, 
	and
	we get $$\wt{\psi}_1 (i_X(F(\CA_X))) \subseteq \wt{\psi}_1 (i_Y(F(\CA_Y))) \subseteq i_Y^Z(F(\CA_Y^Z)) \subseteq \im(\pi_2^Z)$$
	which concludes the proof.
\end{proof}

\begin{corollary}[Theorem \ref{thm:restriction-formal}]\label{cor:FormalityRestr}
	Let $\CA$ be a formal arrangement. Then for any $Z \in L(\CA)$ the
	restriction $\CA^Z$ is formal, too.
\end{corollary}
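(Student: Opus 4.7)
The plan is to obtain the corollary as an immediate consequence of Proposition \ref{prop:ImpiZ}. By the reformulation recorded at the beginning of Section \ref{sec:restriction-formal}, $\CA$ is formal precisely when $\pi_2$ is surjective, i.e.\ when $\coker(\pi_2)=0$; analogously, $\CA^Z$ is formal precisely when $\coker(\pi_2^Z)=0$. Proposition \ref{prop:ImpiZ} asserts that the map $\coker(\pi_2)\to\coker(\pi_2^Z)$ induced by $\wt{\psi}_1$ is surjective. Hence if the domain vanishes so does the codomain, and the corollary drops out. There is nothing further to argue once the three propositions of Section \ref{sec:restriction-formal} are in hand.

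The genuine obstacle, already overcome in those propositions, is the construction of a useful comparison map between $F(\CA)$ and $F(\CA^Z)$: passing from $\CA$ to $\CA^Z$ deletes the hyperplanes of $\CA_Z$ and identifies all hyperplanes with a common trace on $Z$, so no tautological comparison is available. The fix, implemented in Proposition \ref{prop:psi1}, is to exploit the partition $\CA\setminus\CA_Z=\bigsqcup_{K\in\CA^Z}\{H\in\CA:H\cap Z=K\}$ and to sum the restricted forms $\alpha_H|_Z$ within each fibre. This yields a surjection $\psi_1:\KK(\CA)\twoheadrightarrow\KK(\CA^Z)$ compatible with the restriction $V^*\to Z^*$, and since the induced map $\ker(\psi_1)\to \Ann_{V^*}(Z)$ is surjective (the forms supported on $\CA_Z$ span $\Ann_{V^*}(Z)$), a snake-lemma diagram chase promotes $\psi_1$ to the required surjection $\wt{\psi}_1:F(\CA)\twoheadrightarrow F(\CA^Z)$ on kernels.

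The remaining step, encoded in Proposition \ref{prop:ImpiZ}, is $\pi_2$-compatibility. The key observation is that every $X\in L_2(\CA)$ is $\leq$ some $Y\in L_2(\CA^Z)$ in the intersection lattice (with the paper's reverse-inclusion convention this means $Y\subseteq X$ as subspaces), so that $\CA_X\subseteq\CA_Y$ and hence $i_X(F(\CA_X))\subseteq i_Y(F(\CA_Y))$. Combined with the naturality diagram \eqref{cd:2}, this gives $\wt{\psi}_1(i_X(F(\CA_X)))\subseteq i_Y^Z(F(\CA_Y^Z))\subseteq\im(\pi_2^Z)$. Summing over $X\in L_2(\CA)$ yields $\wt{\psi}_1(\im(\pi_2))\subseteq\im(\pi_2^Z)$, and together with the surjectivity of $\wt{\psi}_1$ this produces the desired surjection on cokernels. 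The corollary is then a one-line deduction, and I would simply cite Proposition \ref{prop:ImpiZ} together with the equivalence ``formal $\Leftrightarrow$ $\coker(\pi_2)=0$''.
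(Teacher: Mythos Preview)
Your proposal is correct and matches the paper's approach exactly: the corollary is stated in the paper without proof, as it is immediate from Proposition \ref{prop:ImpiZ} together with the characterization of formality as surjectivity of $\pi_2$, and your recap of Propositions \ref{prop:psi1}--\ref{prop:ImpiZ} accurately reproduces the paper's arguments (the fibrewise summation to build $\psi_1$, the snake lemma for $\wt{\psi}_1$, and the $L_2$-comparison for $\pi_2$-compatibility).
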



\section{Complements and examples}
\label{s:complements}
In this section, we provide some complements and applications to the main developments.

Thanks to Theorem \ref{thm:restriction-formal}, formality is inherited by restrictions.
In view of the results by DiPasquale \cite{DiPasquale:freeness}
and due to a lack of examples of non-free $k$-formal arrangements in dimensions at least $5$, it is not clear whether Theorem \ref{thm:restriction-formal} extends to higher formality (i.e.\ $k$-formality for $k \geq 3$).
The fact that this is not the case is demonstrated by our next example.
This was found by means of an implementation of a modified
``greedy search'' algorithm based on ideas presented by Cuntz in \cite{Cun21_Greedy}.

\begin{example}
	\label{ex:3formal-not-hereditiary}
	Consider the $5$-arrangement $\CA$ consisting of $11$ hyperplanes in $\BBK^5$ with defining polynomial
	\begin{align*}
	Q(\CA) = \ & x_1x_2x_3x_4x_5(x_1+x_3)(x_1+x_3+x_5)(x_1+x_2+x_4)(x_1+x_2+x_4+x_5) \cdot \\
	& (x_1+x_2+x_3+x_5)(x_1+x_2+x_3+x_4+x_5).
	\end{align*}
  We note that the underlying matroid of $\CA$ is \emph{regular},
  i.e.\ is realizable over any field $\BBK$.
  
  We proceed to show that $\CA$ is $4$-formal (and hence $k$-formal for all $k$).
  To see that $\CA$ is $2$-formal, first observe that there are $6$ relations of rank $2$:
  \begin{align*}
    \alpha_{1}+\alpha_{3}-\alpha_{6}&=0, \\
    \alpha_{2}+\alpha_{7}-\alpha_{10}&=0, \\
    \alpha_{3}+\alpha_{9}-\alpha_{11}&=0, \\
    \alpha_{4}+\alpha_{10}-\alpha_{11}&=0, \\
    \alpha_{5}+\alpha_{6}-\alpha_{7}&=0, \\
    \alpha_{5}+\alpha_{8}-\alpha_{9}&=0, \\
  \end{align*}
  where $\alpha_i$ is the $i$-th linear form in the order as it appears in $Q(\CA)$ above.
  Thus, the map $\pi_2:\bigoplus\limits_{X\in L_2} F_2(\CA_X) \rightarrow F(\CA)$ is defined by the matrix
  \[
    \left(\begin{array}{rrrrrrrrrrr}
    1 & 0 & 1 & 0 & 0 & -1 & 0 & 0 & 0 & 0 & 0 \\
    0 & 1 & 0 & 0 & 0 & 0 & 1 & 0 & 0 & -1 & 0 \\
    0 & 0 & 1 & 0 & 0 & 0 & 0 & 0 & 1 & 0 & -1 \\
    0 & 0 & 0 & 1 & 0 & 0 & 0 & 0 & 0 & 1 & -1 \\
    0 & 0 & 0 & 0 & 1 & 1 & -1 & 0 & 0 & 0 & 0 \\
    0 & 0 & 0 & 0 & 1 & 0 & 0 & 1 & -1 & 0 & 0
    \end{array}\right)\,,
  \]
  which is of rank $6$.
  Since $\dim F(\CA)=|\CA|-r(\CA) = 11-5=6$, it follows that $\pi_2$ is surjective, so $\CA$ is $2$-formal.
  We also see that $\ker\pi_2= R_3(\CA) = \{0\}$ and therefore $R_3(\CA_X)=\{0\}$ for any $X\in L_3(\CA)$ as well.
  Thus, $\pi_3:\bigoplus R_3(\CA_X)\rightarrow R_3(\CA)$ is surjective and its kernel is $R_4(\CA)=\{0\}$, so the map $\pi_4:\bigoplus R_4(\CA_X)\rightarrow R_4(\CA)$ is surjective and $\CA$ is $4$-formal.
 
	The $5$ coordinate hyperplanes $\ker(x_1),\ker(x_2),\ldots,\ker(x_5)$ form an lc-basis
	of $\CA$ and thus the arrangement is combinatorially formal by Proposition \ref{prop:lcbasis}.
		
	Yet, the restriction $\CA^H$ to $H = \ker(x_2)$ in $\BBK^4$,
	which has defining polynomial
	\[
	Q(\CA^H) = x_1x_2x_3x_4(x_1+x_3)(x_1+x_3+x_4)(x_1+x_2)(x_1+x_2+x_4)(x_1+x_2+x_3+x_4),
	\]
	fails to be $3$-formal. To see this, again consider the rank $2$ relations of $\CA^H$:
  \begin{align*}
    \alpha_{1}+\alpha_{2}-\alpha_{7}&=0, \\
    \alpha_{1}+\alpha_{3}-\alpha_{5}&=0, \\
    \alpha_{2}+\alpha_{6}-\alpha_{9}&=0, \\
    \alpha_{3}+\alpha_{8}-\alpha_{9}&=0, \\
    \alpha_{4}+\alpha_{5}-\alpha_{6}&=0, \\
    \alpha_{4}+\alpha_{7}-\alpha_{8}&=0, \\
  \end{align*}
  where $\alpha_i$ is the $i$-th linear form in the order as it appears in $Q(\CA^H)$ above.
  The matrix
  \[
  \left(\begin{array}{rrrrrrrrr}
  1 & 1 & 0 & 0 & 0 & 0 & -1 & 0 & 0 \\
  1 & 0 & 1 & 0 & -1 & 0 & 0 & 0 & 0 \\
  0 & 1 & 0 & 0 & 0 & 1 & 0 & 0 & -1 \\
  0 & 0 & 1 & 0 & 0 & 0 & 0 & 1 & -1 \\
  0 & 0 & 0 & 1 & 1 & -1 & 0 & 0 & 0 \\
  0 & 0 & 0 & 1 & 0 & 0 & 1 & -1 & 0
  \end{array}\right)
  \]
  is of rank $5$, so its kernel $R_3(\CA^H)$ is of dimension $1$. Consider the following table of the nontrivial rank $3$ intersections in $L(\CA^H)$ and the nontrivial rank $2$ flats they are contained in. For the set $\{H_{i_1},\ldots,H_{i_k}\}$ we write $i_1 i_2 \cdots i_k$ as a shorthand.
  \begin{center}
    \tiny
  \begin{tabular}{l|l}
    nontrivial $X\in L_3(\CA^H)$ & nontrivial $Y \in L_2(\CA^H)$ with $Y< X$ \\
    \hline
    $1\, 2\, 3\, 5\, 7$ & $1\, 2\, 5,\ 1\, 3\, 4$ \\
    $1\, 2\, 4\, 7\, 8$ & $1\, 2\, 4,\ 3\, 4\, 5$ \\
    $1\, 2\, 6\, 7\, 9$ & $1\, 2\, 4,\ 2\, 3\, 5$ \\
    $1\, 3\, 4\, 5\, 6$ & $1\, 2\, 4,\ 3\, 4\, 5$ \\
    $1\, 3\, 5\, 8\, 9$ & $1\, 2\, 3,\ 2\, 4\, 5$ \\
    $2\, 3\, 6\, 8\, 9$ & $1\, 3\, 5,\ 2\, 4\, 5$ \\
    $2\, 4\, 5\, 6\, 9$ & $1\, 4\, 5,\ 2\, 3\, 4$ \\
    $3\, 4\, 7\, 8\, 9$ & $1\, 4\, 5,\ 2\, 3\, 4$ \\
    $4\, 5\, 6\, 7\, 8$ & $1\, 2\, 3,\ 1\, 4\, 5$ \\
  \end{tabular}
  \end{center}
	From the table it is obvious that the rank $2$ relations in every rank $3$ localization of $\CA^H$ are linearly independent, so $R_3((\CA^H)_X)=\{0\}$ for every $X\in L_3(\CA^H)$.
  We conclude that the map $\pi_3: \bigoplus R_3((\CA^H)_X) \rightarrow R_3(\CA^H)$ is not surjective and therefore $\CA^H$ is not $3$-formal. 

	Moreover, for $X = \ker(x_1) \cap \ker(x_2)\cap \ker(x_4)$, the rank $3$ localization 	
	$\CA_X$ is obviously  not formal.
	Consequently, $\CA_X$ fails to be factored, $K(\pi,1)$ (for $\BBK = \BBC$), and free, owing to 
	Theorems \ref{thm:factored-cformal}, \ref{thm:kpi1formal}, and \ref{thm:brandtterao},
	respectively, and thus also $\CA$ fails to be factored, $K(\pi,1)$ (for $\BBK = \BBC$), and free, thanks to Remarks \ref{rem:factored}, \ref{rem:local-kpione},  and \ref{rem:local-free}, respectively.
	
	Finally, the argument in the last paragraph also shows that the given lc-basis of $\CA$ does not descend to one for $\CA_X$. 
\end{example}

Owing to \cite{yuzvinsky:obstruction} or by the example above, formality is not inherited by arbitrary localizations.
The following consequence of Theorem \ref{thm:restriction-formal} shows however that it is passed to special localizations.

\begin{corollary}
	\label{cor:modular-formal}
	Suppose $X \in L(\CA)$ is modular.
	If $\CA$ is formal, then so is~$\CA_X$.
\end{corollary}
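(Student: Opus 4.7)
The plan is to reduce the statement about the localization $\CA_X$ to an application of Theorem \ref{thm:restriction-formal} for a suitable restriction $\CA^Y$ via the geometric modularity lemma \ref{lem:modular2}.

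First, I would invoke Lemma \ref{lem:modular2} to obtain a complementary intersection $Y \in L(\CA)$ with $\rk(Y) = r-q$ (where $q = \rk X$) such that the essentialized arrangements $(\CA_X/X, V/X)$ and $(\CA^Y/T(\CA), Y/T(\CA))$ are linearly isomorphic. This is the key structural input provided by modularity.

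Next, I would observe two elementary facts about formality. Firstly, formality is a linear invariant: linearly isomorphic arrangements have isomorphic spaces $F(\cdot)$ and $F_2(\cdot)$, hence one is formal if and only if the other is. Secondly, formality is preserved under essentialization: replacing $V^*$ by its quotient modulo the annihilator of the center does not change the kernel $F(\CA)$ nor the subspace $F_2(\CA)$ generated by length-$\leq 3$ elements, since the defining map $\bigoplus \BBK e_H \to V^*$ factors through the span of the $\alpha_H$, which is the dual of the essentialization. Both facts follow directly from the definition in \S\ref{ssect:formality}.

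With these preparations, the argument runs as follows. Since $\CA$ is formal and $Y \in L(\CA)$, Theorem \ref{thm:restriction-formal} yields that $\CA^Y$ is formal. Essentialization (applied to $\CA^Y$, whose center is $T(\CA)$) preserves formality, so $\CA^Y/T(\CA)$ is formal. The linear isomorphism from Lemma \ref{lem:modular2} then transfers formality to $\CA_X/X$. Finally, essentialization (now applied to $\CA_X$, whose center is $X$) again preserves formality, and we conclude that $\CA_X$ is formal.

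There is no real obstacle: the only point that might require a line of justification is the invariance of formality under essentialization, which is immediate from the definition, since $F(\CA)$ is defined purely in terms of the dependencies among the forms $\alpha_H$ and is insensitive to enlarging the target $V^*$ beyond their span.
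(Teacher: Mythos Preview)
Your proposal is correct and follows essentially the same route as the paper's proof: invoke Lemma~\ref{lem:modular2} to identify $\CA_X/X$ with $\CA^Y/T(\CA)$ for a complementary $Y$, then apply Theorem~\ref{thm:restriction-formal} to $\CA^Y$. You are in fact more explicit than the paper, which leaves the invariance of formality under linear isomorphism and essentialization implicit; your spelling out of these points is a welcome clarification rather than a deviation.
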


\begin{proof}
	Since $X \in L(\CA)$ is modular, it follows from Lemma \ref{lem:modular2} that $\CA_X/X \cong \CA^Y/T(\CA)$ for a
	$Y \in L(\CA)$ complementary to $X$.
	As $\CA$ is formal, so is the restriction $\CA^Y$, thanks to Theorem \ref{thm:restriction-formal}.
\end{proof}

The following 
consequence of Lemma \ref{lem:modular1}
gives a converse 
to Corollary \ref{cor:modular-formal} in corank $1$.

\begin{corollary}
	\label{cor:modular-formal-corank1}
	If $X \in L(\CA)$ is modular of corank $1$, then $\CA_X$ is formal if and only if $\CA$ is formal.
\end{corollary}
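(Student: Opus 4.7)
The plan is that the forward implication is immediate from Corollary~\ref{cor:modular-formal}, so the entire work consists of the converse: assuming $\CA_X$ is formal, deduce that $\CA$ is formal. Let $\xi = (\alpha_H)_{H \in \CA} \in F(\CA)$ be an arbitrary element, with each $\alpha_H \in \Ann_{V^*}(H)$ and $\sum_H \alpha_H = 0$; the goal is to show $\xi \in F_2(\CA)$.

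The first step is to invoke Lemma~\ref{lem:modular1}, which is the characterisation of corank-one modular flats tailor-made for this kind of argument. Since $r(X) = r(\CA) - 1$, the set $\CA \setminus \CA_X$ is non-empty; fix an arbitrary $H_0 \in \CA \setminus \CA_X$. For every other $H \in \CA \setminus \CA_X$, Lemma~\ref{lem:modular1} supplies a $K_H \in \CA_X$ with $r(H \cap H_0 \cap K_H) = 2$, equivalently a non-trivial linear dependence $a_H \alpha_H + b_H \alpha_{H_0} + c_H \alpha_{K_H} = 0$ with all three scalars non-zero (nontriviality of $a_H$ follows from $H_0 \notin \CA_X \ni K_H$, and similarly for $b_H, c_H$). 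This packaged as an element $\rho_H \in F(\CA_{H \cap H_0 \cap K_H}) \subseteq F_2(\CA)$ of length three whose $H$-slot is a non-zero multiple of $\alpha_H$.

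The second step is to subtract a suitable scalar multiple of each $\rho_H$ from $\xi$ to kill the $H$-slot; processing all $H \in \CA \setminus (\CA_X \cup \{H_0\})$ produces an element $\xi' \in F(\CA)$ congruent to $\xi$ modulo $F_2(\CA)$ and supported on $\CA_X \cup \{H_0\}$. I would then argue that the $H_0$-slot of $\xi'$ is forced to vanish: a non-zero contribution $s\alpha_{H_0}$ at $H_0$ would imply, via the relation $\sum (\text{slots}) = 0$, that $\alpha_{H_0} \in \operatorname{span}\{\alpha_K : K \in \CA_X\} = \Ann_{V^*}(X)$; but $H_0 \notin \CA_X$ means $X \not\subseteq H_0$, so $\alpha_{H_0} \notin \Ann_{V^*}(X)$, a contradiction. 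Hence $\xi'$ is supported on $\CA_X$, i.e.\ $\xi' \in F(\CA_X)$.

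Finally, the hypothesis that $\CA_X$ is formal gives $\xi' \in F_2(\CA_X)$, and under the natural inclusion $F_2(\CA_X) \hookrightarrow F_2(\CA)$ this places $\xi'$ inside $F_2(\CA)$; combined with $\xi - \xi' \in F_2(\CA)$, we conclude $\xi \in F_2(\CA)$. The only point demanding care is the little linear-algebra observation that forces the $H_0$-slot of $\xi'$ to vanish, since without it the reduction only lands in $F(\CA_X \cup \{H_0\})$ and one cannot yet invoke the formality of $\CA_X$; but this is immediate from $X \not\subseteq H_0$ and a dimension count, so no serious obstacle arises.
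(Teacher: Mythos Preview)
Your proof is correct and follows precisely the route the paper intends: the forward direction is Corollary~\ref{cor:modular-formal}, and the converse is deduced from Lemma~\ref{lem:modular1} by using the rank-$2$ dependencies it provides to reduce any relation to one supported on $\CA_X\cup\{H_0\}$, then observing that the $H_0$-slot must vanish because $\alpha_{H_0}\notin\Ann_{V^*}(X)=\operatorname{span}\{\alpha_K:K\in\CA_X\}$. The paper states the corollary only as an immediate consequence of Lemma~\ref{lem:modular1} without spelling out the argument, so you have simply supplied the expected details.
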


\bigskip

In the following example, we present a family  of real arrangements $\CK_n$ of rank $n \ge 3$, none of which is free, $K(\pi,1)$, or factored. Nevertheless, each is combinatorially formal. So that in particular, here formality is not a consequence of any of the other sufficient properties. 

\begin{example} 
	\label{ex:nonkpioneII}
	Let $\CB_n$ be the reflection arrangement of the hyperoctahedral group of type
	$B_n$ and let $\CA_{n-1}$ be the braid 
	arrangement of type $A_{n-1}$, a subarrangement of $\CB_n$.
	Let 
	\[
	\CK_n  := \CB_n \setminus \CA_{n-1}
	\]
	be the complement of $\CA_{n-1}$ in $\CB_n$.
	Thus $\CK_n$ has defining polynomial 
	\[
	Q(\CK_n) = \prod\limits_{i = 1}^n x_i\prod\limits_{1 \leq i < j \leq n}\left(x_i + x_j\right).
	\]
	It was shown in \cite[Ex.~2.7]{amendroehrle:ideal} 
	that $\CK_n$ is not $K(\pi,1)$ and not free for $n \ge 3$.
	
	We next show by induction on $n$ that $\CK_n$ is also not factored either for $n \ge 3$. 
	Owing to Theorem \ref{thm:paris-factored},
	$\CK_3$ is not factored, as it is not $K(\pi,1)$.
	Now suppose that $n > 3$ and that $\CK_{n-1}$ is not factored.
	Let $X := \cap_{i = 1}^{n-1} \ker x_i$. Then one readily checks 
	that 
	\[
	(\CK_n)_X \cong \CK_{n-1}. 
	\]
	It follows from our induction hypothesis and Remark \ref{rem:factored} that 
	also $\CK_n$ fails to be factored.
	
	Finally, we observe that $\CK_n$ admits an lc-basis. For, let $\CB \subset \CK_n$ be the Boolean subarrangement, i.e.~$Q(\CB) = \prod_{i = 1}^n x_i$. Then one easily checks that $\lc(\CB) = \CK_n$. 
	It thus follows from Proposition \ref{prop:lcbasis} that $\CK_n$ is combinatorially formal; formality for $\CK_3$ was already observed by Falk and Randell in  \cite[(3.12)]{falkrandell:homotopy}.
\end{example}

\bigskip {\bf Acknowledgments}: 
The second author is grateful to the Max Planck Institute for Mathematics in Bonn for support
during his stay as a postdoctoral fellow.


\bigskip

\bibliographystyle{amsalpha}

\newcommand{\etalchar}[1]{$^{#1}$}
\providecommand{\bysame}{\leavevmode\hbox to3em{\hrulefill}\thinspace}
\providecommand{\MR}{\relax\ifhmode\unskip\space\fi MR }
\providecommand{\MRhref}[2]{%
  \href{http://www.ams.org/mathscinet-getitem?mr=#1}{#2} }
\providecommand{\href}[2]{#2}


\end{document}